\documentclass[11pt]{amsart}
 \usepackage[utf8]{inputenc}
\usepackage{color,amssymb, thm-restate,hyperref}
\usepackage[normalem]{ulem}
\usepackage[shortlabels]{enumitem}

\usepackage[capitalise]{cleveref}
\usepackage{mathrsfs}

\usepackage{ocgx2}
\usepackage{graphicx}

\usepackage[all]{xy}

\usepackage{thmtools}
\usepackage{thm-restate}
\usepackage{todonotes}

\newtheorem{theorem}{Theorem}[section]

\newtheorem{proposition}[theorem]{Proposition}
\newtheorem{corollary}[theorem]{Corollary}

\newtheorem{lemma}[theorem]{Lemma}

\newtheorem{question}[theorem]{Question}

\newtheorem*{claim*}{Claim}
\theoremstyle{definition}

\newtheorem{definition}[theorem]{Definition}
\newtheorem{example}[theorem]{Example}
\newtheorem*{definition*}{Definition}

\newcommand{\filter}{\mathcal{F}}

\newcommand{\Homeo}{\operatorname{Homeo}}
\newcommand{\N}{\mathbb N}

\newcommand{\Q}{\mathbb Q}

\newcommand{\Z}{\mathbb Z}
\newcommand{\F}{\mathcal F}

\newcommand{\A}{\mathcal A}

\newcommand{\w}{\omega}

\newcommand{\im}{\operatorname{im}}

\newcommand{\Sym}{\operatorname{Sym}}
\newcommand{\Stab}{\operatorname{Stab}}

\newcommand{\genset}[1]{\langle #1\rangle}

\newcommand{\makeset}[2]{\left\lbrace #1 \;\middle|\;
 \begin{tabular}{@{}l@{}}
   #2
  \end{tabular}
  \right\rbrace}

\title{Maximal subgroups of homeomorphism groups}
\author{S. Bardyla, L. Elliott, and Y. P\'eresse}

\address{S.~Bardyla: University of Vienna, Institute of Mathematics, Vienna, Austria}
\thanks{The research of the first named author was funded in whole by the Austrian Science Fund FWF [10.55776/ESP399].}
\email{sbardyla@gmail.com}
\address{L. Elliott: University of Manchester, Department of Mathematics and Statistics, UK}
\email{luna.elliott142857@gmail.com}

\address{Y. P\'eresse: University of Hertfordshire, School of Physics, Engineering and Computer Science, UK}
\email{y.peresse@herts.ac.uk}
\makeatletter

\subjclass[2020]{20E28, 57S05}
\keywords{Maximal subgroups, homeomorphism groups, topological moieties, ultrafilters, finite partitions}

\begin{document}

\begin{abstract}
We show that the homeomorphism groups of the following spaces have precisely $2^{2^{\aleph_0}}$ maximal subgroups: the rational numbers~$\Q$, the Baire space $\N^{\N}$, the space $\N\times 2^{\N}$ where $2^{\N}$ is the Cantor set, the ordinal $\omega^2$ under its order topology, and the Sorgenfrey line $\mathbb{S}$. More generally, we find sufficient conditions on a group $G$ acting on a topological space which imply that $G$ has at least $2^{2^{\aleph_0}}$ maximal subgroups.

Moreover, if the groups $\Homeo(\Q)$ and $\Homeo(\N^\N)$ are equipped with the pointwise topology, then it is shown that $\Homeo(\N^\N)$ has precisely $2^{\aleph_0}$ open maximal subgroups, and $\Homeo(\Q)$ has precisely $\aleph_0$ open maximal subgroups and $2^{\aleph_0}$ closed maximal subgroups.
\end{abstract}

\maketitle

\tableofcontents
\section{Introduction}

A proper subgroup $M$ of a group $G$ is called {\em maximal}, if there exists no subgroup $H$ of $G$ such that $M<H<G$. In other words, $M$ is maximal if $M$ together with any element of $G \setminus M$ generates $G$. Maximal subgroups have been studied for a wide range of groups such as countable groups~\cite{GGS}, ample groups~\cite{GV}, Thompson's groups~\cite{A,belk2025type}, branch groups \cite{F}, multi-edge spinal groups~\cite{AKT}, groups of intermediate growth \cite{FG}, Grigorchuck-Gupta-Sidki groups \cite{FT}, $SL_n(\Z)$~\cite{GM}, and finitely generated linear groups~\cite{MS}.

The maximal subgroups of the symmetric group $\Sym(X)$ on a finite set $X$ are classified by the famous O'Nan-Scott Theorem, see \cite{Liebeck_Praeger_Saxl_1988}. Maximal subgroups of $\Sym(\N)$ and related structures have also received a lot of attention, \cite{BST, EMP, HP, KK, LS, MP}. 
It was shown in \cite{subgroups_macpherson_neumann}, that $\Sym(\N)$ has $2^{\mathfrak{c}}$ maximal subgroups where $\mathfrak c$ denotes the cardinality of the continuum. In other words, $\Sym(\N)$ has as many maximal subgroups as it has subsets. This striking result offers an immediate insight into the complexity of the subgroups structure of $\Sym(\N)$ and hence of $\Sym(\N)$ itself. 

If we consider $\N$ to be a discrete topological space, then $\Sym(\N)$ coincides with the homeomorphism group $\Homeo(\N)$. Viewed from this angle, it is natural to ask how many maximal subgroups homeomorphism groups of other topological spaces have. 
In this paper we show that the homeomorphism groups of the following spaces have precisely $2^{\mathfrak{c}}$ maximal subgroups: the discrete space $\N$ (as previously shown in \cite{subgroups_macpherson_neumann}), the rational numbers $\Q$, the Baire space $\N^{\N}$, $\N\times 2^{\N}$ where $2^{\N}$ is the Cantor set, the ordinal $\omega^2$ under its order topology, and the Sorgenfrey line $\mathbb{S}$. More generally, we find sufficient conditions on a group $G$ acting on a topological space which imply that $G$ has at least $2^{\mathfrak c}$ maximal subgroups.

Since $\Homeo(\Q)$ and $\Homeo(\N^\N)$ are particularly important, see~\cite{Bruyns, Droste, Mekler, Neumann, Truss}, we consider their maximal subgroups in more detail.  
We show that, if we endow $\Homeo(\Q)$ and $\Homeo(\N^\N)$ with the pointwise topology, then $\Homeo(\N^\N)$ has precisely $\mathfrak{c}$ open maximal subgroups and $\Homeo(\Q)$ has precisely $\aleph_0$ open maximal subgroups and $\mathfrak c$ closed maximal subgroups.

The paper is structured as follows. Section \ref{section:main_results} contains an overview of our main results, the definitions required to state them precisely, and some of their applications. In Section \ref{section:preliminaries}, we state and prove some general topological lemmas required in subsequent sections. Sections  \ref{section:filters} and \ref{section:partitions} contain the proofs of the main results.

\section{Main results and applications}\label{section:main_results}

 The main result of this paper is Theorem~\ref{bigmaxtheorem} which 
 allows us to count the number of maximal subgroups of a group $G$ acting on a space $X$. We will now define the necessary conditions on $X$ (\cref{def:kappaclast}) and the action of $G$ (\cref{full_emulation}) and give examples of prominent spaces and groups satisfying these conditions. We start with the definitions related to the space $X$.
 
 Recall that a {\em moiety} of a set \(X\) is a subset \(M\) such that \(|M|=|X|=|X\backslash M|\). This inspired the following definition.

\begin{definition}
    A subset \(M\) of a topological space \(X\) is a \emph{topological moiety of} $X$ if \(M\) is clopen and the topological spaces \(M\), \(X\backslash M\), and \(X\) are pairwise homeomorphic.
\end{definition}

Note that $A$ is a topological moiety of $X$ if and only if $X\setminus A$ is a topological moiety of $X$.

\begin{definition}\label{clastic}
    A non-empty topological space \(X\) is called \emph{homeoclastic} if \(X\) satisfies the following conditions:
    \begin{enumerate}
        \item \(X\) is Hausdorff;
        \item the topological moieties of \(X\) form a subbasis for \(X\);
         \item whenever \(\{A,B\}\) is a partition of \(X\) into two clopen sets, then either \(A\) or \(B\) is homeomorphic to \(X\);
        \item a clopen subspace of \(X\) is homeomorphic to \(X\) if and only if it contains a topological moiety of \(X\).
    \end{enumerate}
\end{definition}

Note that condition (4) in Definition \ref{clastic} implies that a homeoclastic space $X$ must be infinite and may be partitioned into $n$ topological moieties for every finite $n$. The next definition is a strengthening of this property. 

\begin{definition}\label{def:kappaclast}
  For an infinite cardinal $\kappa$ a space $X$ is called {\em $\kappa$-homeoclastic} if $X$ is homeoclastic and $X$  can be partitioned into \(\kappa\) topological moieties.
\end{definition}

It is straightforward to verify that a $\kappa$-homeoclastic space is also $\lambda$-homeoclastic for every infinite $\lambda<\kappa$.


\begin{example}\label{example}
    Here are some natural examples of homeoclastic spaces. 
    \begin{enumerate}[\rm(i)]
        \item Every infinite discrete space \(X\) is \(|X|\)-homeoclastic as in this case topological moieties are precisely usual moieties and clopen subsets homeomorphic to \(X\) are simply subsets of cardinality \(|X|\).
        \item The space $\Q$ of rational numbers is \(\aleph_0\)-homeoclastic, as every proper nonempty clopen subspace of \(\Q\) is a topological moiety of \(\Q\), and \(\Q \) is homeomorphic to \(\Q \times \N\).
    \item The Baire space \(\N^\N\) is \(\aleph_0\)-homeoclastic for the same reason;
    \item The Cantor space \(2^{\N}\) is homeoclastic, but not \(\aleph_0\)-homeoclastic, as compact spaces cannot be partitioned into infinitely many topological moieties. 
    \item The topological space \(\N \times 2^{\N}\) is \(\aleph_0\)-homeoclastic by \cref{kappa}. 
  \item Recall that the ordinal \(\omega^2\) is the least ordinal greater than each of the ordinals \(\omega, 2\cdot \omega, 3\cdot \omega,\ldots\). With the order topology this ordinal is homeomorphic to the space \(\N \times \{0,1,\frac{1}{2},\frac{1}{3},\ldots\}\). Taking into account that a clopen subspace of \(\N \times \{0,1,\frac{1}{2},\frac{1}{3},\ldots\}\) is homeomorphic to the entire space precisely if it contains infinitely many of the points \((n,0)\) for \(n\in \N\),  it is routine to check that $\w^2$ equipped with the order topology is \(\aleph_0\)-homeoclastic;
  \item Recall that the Sorgenfrey Line \(\mathbb{S}\) is the real line endowed with the topology whose basis consists of the half open intervals
\([a,b)\), where $a<b$ are arbitrary.
Observe that \(\mathbb{S}\) is homeomorphic to the topological sum of $\aleph_0$-many copies of the subspace \([0,1)\subseteq \mathbb{S}\). It follows from \cite[Theorem~4.3]{Sorg} that every nonempty clopen subspace of $\mathbb{S}$ is homeomorphic to \(\mathbb{S}\). In particular, \([0,1)\) and $\mathbb S\setminus [0,1)$ are homeomorphic to \(\mathbb{S}\).
 At this point, it is clear that \(\mathbb{S}\) is \(\aleph_0\)-homeoclastic. 
 \item \cref{kappa} can be used to construct more \(\kappa\)-homeoclastic spaces for arbitrarily large cardinals \(\kappa\).
    \end{enumerate}
\end{example}

We now move onto the definitions concerning the action of the group $G$. 
Following usual conventions in topology, a group action of a group $G$ on a topological space $X$ is understood to be an action via homeomorphisms on $X$. That is to say, an action of \(G\) on \(X\) is a homomorphism \(\phi_G:G\to \Homeo(X)\).
We say that \(g'\in G\) \emph{acts via \(g\in \Homeo(X)\)} if \(g=(g')\phi_G\).
If  $x\in X$, \(U\subseteq X\) and \(g\in G\), then we write $(x)g$ instead of $(x)((g)\phi_G)$, $Ug$ instead of $U\left((g)\phi_G\right)$, and \(g{\restriction}_U\) instead of \(((g)\phi_G){\restriction}_U\). Unless specified otherwise, the action of a subgroup $G$ of $\Homeo(X)$ is assumed to be the canonical one using the inclusion homomorphism as $\phi_G$.

\begin{definition}\label{defintion:full_action} Let $X$ be a topological space and $G$ be a group acting on \(X\). 
Then $G$ {\em acts fully on $X$} if it satisfies the following property: If \(h\in \Homeo(X)\) is such that there is an open cover \(\mathcal C\) of \(X\) with \(h{\restriction}_U\in \makeset{g{\restriction}_U}{\(g\in G\)}\) for all \(U\in \mathcal C\), then there is \(h'\in G\) acting via the homeomorphism \(h\). 

\end{definition}
Definition \ref{defintion:full_action} above has already appeared in the literature (see~\cite{BEH,Kat}).
The following definition was inspired by the notion of a \emph{vigorous group} from~\cite{BEH}. 
\begin{definition}
Let $X$ be a homeoclastic space and let \(G\) be a group acting on \(X\). Then $G$ \emph{emulates} $\Homeo(X)$ if for any finite partitions \(\mathcal{P}, \mathcal{Q}\) of \(X\) into topological moieties such that $|\mathcal{P}|=|\mathcal{Q}|$ and any bijection \(\phi:\mathcal{P} \to \mathcal{Q}\), there exists \(g_\phi\in G\) such that  \((P)g_\phi = (P)\phi\) for all \(P\in \mathcal{P}\).
\end{definition}

\begin{definition}\label{full_emulation}
    If \(G\) is a group acting on a homeoclastic space \(X\), then we say that \(G\) \emph{fully emulates} \(\operatorname{Homeo}(X)\) if \(G\) emulates \(X\) and acts fully on \(X\).
\end{definition}
Note that the homeomorphism group of any homeoclastic space fully emulates itself.
In the next example, we present other groups which fully emulate $\Homeo(X)$, for some homeoclastic spaces $X$. 
\begin{example}\label{thompsonish}
 We denote the free monoid over a set $A$ by $A^*$. Each of the following groups $G_1$, $G_2$, $G_3$ is a proper subgroup of the group of all homeomorphisms of its associated space that fully emulates the space's homeomorphism group. Note that $G_1$ is isomorphic to Thompson's group \(V\).
    \begin{align*}
   G_1&=\makeset{g\in \Homeo(2^\N)}{for all \(x\in 2^\N\) there is a prefix \(p\) of \(x\) and \(q\in \{0,1\}^*\) \\
     such that for all \(y\in 2^\N\) we have \((py)g=qy\)}\\
     G_2&=\makeset{g\in \Homeo(\N^\N)}{for all \(x\in \N^\N\) there is a prefix \(p\) of \(x\) and \(q\in \N^*\) \\
     such that for all \(y\in \N^\N\) we have \((py)g=qy\)}\\
     G_3&=\makeset{g\in \Homeo(\Q)}{for all \(q\in \Q\) there is a neighborhood \(U\) of \(q\) such \\that \(g{\restriction}_U:U\to (U)g\) is an order isomorphism}
     \end{align*}
\end{example}
We can now state the main result of this paper. 

\begin{restatable}{theorem}{bigmaxtheorem}
\label{bigmaxtheorem}
    Let $X$ be a \(\kappa\)-homeoclastic space and $G$ be a group acting on \(X\). If $G$ fully emulates  $\operatorname{Homeo}(X)$, then \(G\) has at least $2^{2^{\kappa}}$ distinct maximal subgroups.
    Moreover, if $X$ has a basis of size $\kappa$ and the kernel of the action of \(G\) has size at most $2^\kappa$, then \(G\) has precisely $2^{2^{\kappa}}$  maximal subgroups.
\end{restatable}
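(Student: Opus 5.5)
We will follow the strategy of Macpherson and Neumann for $\Sym(\N)$, replacing moieties by topological moieties and ultrafilters on $\N$ by ultrafilters on $\kappa$.

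\textbf{Construction of the subgroups.} Fix a partition $\{X_\alpha:\alpha<\kappa\}$ of $X$ into topological moieties. For a non-principal uniform ultrafilter $\filter$ on $\kappa$, consider the collection $\mathcal I_\filter$ of clopen sets $A\subseteq X$ such that $\{\alpha: A\cap X_\alpha$ contains no topological moiety of $X_\alpha\}\in\filter$. We call such sets ``$\filter$-small''. Let $M_\filter$ be the setwise stabiliser of $\mathcal I_\filter$ in $G$, that is, the set of $g\in G$ with $A\in\mathcal I_\filter\iff Ag\in\mathcal I_\filter$.

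Using condition (3) of \cref{clastic}, applied inside each $X_\alpha\cong X$, together with condition (4), we will check that $\mathcal I_\filter$ is closed under finite unions. For each $\alpha$, exactly one of $A\cap X_\alpha$ and $X_\alpha\setminus A$ contains a moiety, or both do, and $\filter$ decides between these cases. Hence $\mathcal I_\filter$ behaves like an ideal of ``small'' clopen sets. Properness of $M_\filter$ is obtained by emulation: there is a $g\in G$ mapping $\bigcup_{\alpha\in F}X_\alpha$, for some $F\in\filter$, onto a set that is $\filter$-small. To build such a $g$, split each piece into moieties and use full action to glue local homeomorphisms into a global one.

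For distinct ultrafilters $\filter\neq\mathcal G$, choose disjoint $F\in\filter$ and $F'\in\mathcal G$. A homeomorphism swapping the two unions shows that $M_\filter\neq M_{\mathcal G}$. Since there are $2^{2^\kappa}$ uniform ultrafilters on $\kappa$, the lower bound follows once each $M_\filter$ is shown to lie in a maximal subgroup. Different ultrafilters might, however, lead to the same maximal subgroup.

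\textbf{Maximality.} This is the main obstacle. We need to show that $\langle M_\filter,h\rangle=G$ for every $h\in G\setminus M_\filter$. Such an $h$ maps some small set $A$ onto a large set. Composing with elements of $M_\filter$, we can then move any small set to any large set, and conversely. For an arbitrary $g\in G$, we will decompose $X$ into finitely many clopen pieces, each a topological moiety, and classify them as small or large relative to $\filter$. We then write $g$ as a product of elements of $M_\filter$ and conjugates of $h$. Each factor is built by emulation on finite partitions into moieties, with full action used to patch local homeomorphisms. The required ``back-and-forth'' combinatorics is a Neumann-style argument: one must realise enough permutations of finite moiety partitions inside $\langle M_\filter,h\rangle$.

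If direct maximality fails, the fallback is to prove that $M_\filter$ is contained in only few maximal subgroups. Conversely, we would show that any maximal subgroup contains at most $2^\kappa$ of the groups $M_\filter$, perhaps by showing that any proper subgroup containing $M_\filter$ stabilises $\mathcal I_\filter$. Either route still yields $2^{2^\kappa}$ distinct maximal subgroups.

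\textbf{Upper bound.} If $X$ has a basis of size $\kappa$, then every homeomorphism is determined by its action on basic open sets. Hence $|\Homeo(X)|\le(2^\kappa)^\kappa=2^\kappa$. The image of $G$ in $\Homeo(X)$ therefore has size at most $2^\kappa$, and with a kernel of size at most $2^\kappa$ we get $|G|\le 2^\kappa$. Thus $G$ has at most $2^{|G|}\le 2^{2^\kappa}$ subsets, and in particular at most that many maximal subgroups. Combining this with the lower bound gives exactly $2^{2^\kappa}$.
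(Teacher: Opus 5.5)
\textbf{The lower bound is not proved.} Everything depends on $M_\filter$ being maximal (or at least lying in a maximal subgroup), and that is exactly the step you leave open.

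Your ``Neumann-style back-and-forth'' paragraph only restates the goal. The claim that $\langle M_\filter,h\rangle$ can move any small set onto any large set is never justified.

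The fallback also fails as stated. It presupposes that each $M_\filter$ lies in some maximal subgroup. But $G$ is in general not finitely generated (e.g.\ $\Homeo(\Q)$ is uncountable), so no Zorn-type argument gives such an overgroup. You also give no argument that a maximal subgroup contains at most $2^\kappa$ of the groups $M_\filter$.

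The distinctness step is wrong as written. A homeomorphism swapping $\bigcup_{\alpha\in F}X_\alpha$ and $\bigcup_{\alpha\in F'}X_\alpha$ sends an $\filter$-large set to an $\filter$-small one. It also sends a $\mathcal G$-large set to a $\mathcal G$-small one. So it lies in neither $M_\filter$ nor $M_{\mathcal G}$. What you need instead is this: split $F'=F_1\sqcup F_2$ with $F_1\in\mathcal G$, and take an element supported on the $\filter$-small set $\bigcup_{\alpha\in F'}X_\alpha$ that swaps $\bigcup_{F_1}X_\alpha$ and $\bigcup_{F_2}X_\alpha$. Being supported on an $\filter$-small set puts it in $M_\filter$, and the swap takes it out of $M_{\mathcal G}$.

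\textbf{The missing idea is the generation lemma} \cref{smallsuppgen}: if $\{A,B,C\}$ is a partition of $X$ into topological moieties, then $\langle G_{A\cup B},G_{B\cup C}\rangle=G$. With it, your $M_\filter$ is in fact maximal.
\begin{enumerate}
\item $G_S\le M_\filter$ for every $\filter$-small clopen $S$.
\item If $h\notin M_\filter$, then after possibly replacing $h$ by $h^{-1}$, some small $S$ has $(S)h$ not small. Hence $G_{(S)h}=h^{-1}G_Sh\le\langle M_\filter,h\rangle$.
\item Let $L\in\filter$ be a set of columns on which $(S)h$ contains a moiety $Y_\alpha$ of $X_\alpha$, chosen with $|\kappa\setminus L|=\kappa$. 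Conjugate $G_{\bigcup_L Y_\alpha}$ by column-preserving elements of $M_\filter$ (built by emulation on each $X_\alpha$ and glued by full action). Applying the lemma inside $D=\bigcup_{\alpha\in L}X_\alpha$ then yields $G_D$.
\item Split $L=L_1\sqcup L_2$ with $L_2\in\filter$. Then $X\setminus\bigcup_{L_2}X_\alpha$ is small, and the lemma applied to $\{X\setminus D,\bigcup_{L_1}X_\alpha,\bigcup_{L_2}X_\alpha\}$ gives $G$.
\end{enumerate}

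The paper avoids this column bookkeeping by using stabilisers of TM ultrafilters on $X$ itself, obtained from ultrafilters on $\kappa$ via a selector $a_\alpha\in X_\alpha$. These are ``prime'': for each moiety, exactly one of it and its complement is in $\filter$. Hence $G_{X\setminus U}\le G_\filter$ for every moiety $U\in\filter$, and a single conjugation by $h$ plus one application of the lemma gives maximality.

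Your upper-bound count via images of basic open sets is fine and equivalent to the paper's dense-subset count.
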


We obtain the following (almost) immediate corollary to \cref{bigmaxtheorem}.


\begin{corollary}\label{cor:manyexamples}
    The following groups have precisely $2^{\mathfrak{c}}$ many maximal subgroups.
    \begin{enumerate}[\rm(1)]
        \item $\Sym(\N)=\Homeo(\N)$;
        \item $\Homeo(\Q)$;
        \item $\Homeo(\N^{\N})$;
        \item $\Homeo(\N\times 2^\N)$;
        \item $\Homeo(\omega^2)$, where the ordinal $\omega^2$ carries the usual order topology;
        \item $\Homeo(\mathbb{S})$, where $\mathbb{S}$ denotes the Sorgenfrey line.
    \end{enumerate}
    \begin{proof}
In cases (1) to (5), this is an immediate corollary to \cref{bigmaxtheorem} with $\kappa=\aleph_0$. 
The Sorgenfrey line $\mathbb{S}$ does not have a basis of size $\aleph_0$. Nevertheless, Theorem \ref{bigmaxtheorem} still implies that $\Homeo(\mathbb{S})$ has at least $2^{\mathfrak{c}}$ many maximal subgroups. Since $\mathbb{S}$ is Hausdorff, every homeomorphism of $\mathbb{S}$ is determined by its action on the dense subset $\Q$. Thus $|\Homeo(\mathbb{S})| \leq \mathfrak{c}^{|\Q|}=\mathfrak{c}$ and so $\Homeo(\mathbb{S})$ has at most $2^{\mathfrak{c}}$ subsets. In particular $\Homeo(\mathbb{S})$ has no more than $2^{\mathfrak{c}}$ maximal subgroups.
    \end{proof}
\end{corollary}

Since the Cantor space is not \(\aleph_0\)-homeoclastic, the following natural question remains open.
\begin{question}\label{question_cantor}
    How many maximal subgroups does \(\operatorname{Homeo}(2^\N)\) have?
\end{question}
It is routine to check that the stabiliser of any point in \(\operatorname{Homeo}(2^\N)\) is maximal. Thus \(\operatorname{Homeo}(2^\N)\) has at least \(\mathfrak{c}\) maximal subgroups. Item (4) of \cref{cor:manyexamples} brings us close to answering \cref{question_cantor}. In particular, for all \(x \in 2^\N\), the space \(2^\N\backslash \{x\}\) is homeomorphic to \(\N \times 2^{\N}\). 
Since the open neighbourhoods of \(x\in 2^{\N}\) are precisely the sets containing \(x\) with compact complement, every homeomorphism of \(2^\N\backslash \{x\}\) extends to a homeomorphism of \(2^\N\). 
Since the point stabilisers subgroups of $\Homeo(2^\N)$ are isomorphic to $\Homeo(2^\N\times \N)$, \cref{cor:manyexamples} implies that the point stabilisers in \(\operatorname{Homeo}(2^\N)\) are maximal subgroups each of which has \(2^{\mathfrak{c}}\) maximal subgroups. So there is a family of \(2^{\mathfrak{c}}\) subgroups of \(\Homeo(2^\N)\) each of which can generate \(\Homeo(2^\N)\) with two additional elements.

We now consider the case of $\Homeo(\Q)$ in more detail to count how many of its $2^{\mathfrak{c}}$ maximal subgroups are open or closed. The answer to this question obviously depends on which topology we choose to put on $\Homeo(\Q)$. There is no universally accepted canonical topology for $\Homeo(\Q)$. In 
\cite{Rosendal} it was shown  that $\Homeo(\Q)$ has no Polish group topologies and
\cite[Theorem 1.4]{E} implies that the group Zariski topology on \(\operatorname{Homeo}(\Q)\) is not Hausdorff.
In this paper, we first consider the pointwise topology on $\Homeo(\Q)$, which $\Homeo(\Q)$ inherits from $\Sym(\Q)$. It is a natural choice in the context of general permutation groups and has nice topological properties such as metrisability and separability.  

\begin{restatable}{theorem}{BBA}
\label{maximalsubgroup breakdown}
 The topological group $\Homeo(\Q)$ has
   \begin{enumerate}[\rm(1)]
       \item  \(\aleph_0\) open maximal subgroups;
       \item  \(\mathfrak{c}\) closed maximal  subgroups;
       \item  \(2^{\mathfrak c}\) maximal subgroups;
   \end{enumerate}
 where the topology on $\Homeo(\Q)$ is the topology of pointwise convergence generated by the sets $U_{x,y}:=\makeset{f\in\Homeo(\Q)}{ $(x)f=y$}$ over all $x,y \in \Q$.
\end{restatable}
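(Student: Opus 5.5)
Part (3) follows directly from \cref{cor:manyexamples}(2). The plan for (1) and (2) is to use the pointwise topology together with the fact that $\Homeo(\Q)$ acts highly transitively on $\Q$: any two $n$-tuples of distinct rationals are related by a homeomorphism. To see this, separate the points of each tuple by a clopen partition of $\Q$ into $n$ pieces, each a topological moiety. Then glue homeomorphisms between the pieces, each sending the chosen point to the chosen point, using that $\Q$ is homogeneous. For a finite $F\subseteq\Q$ write $G_{(F)}$ for the pointwise stabiliser and $G_{\{F\}}$ for the setwise stabiliser.

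For (1), the key auxiliary lemma is $\langle G_{(A)},G_{(B)}\rangle=G_{(A\cap B)}$ for finite $A,B\subseteq\Q$. I would prove it as in $\Sym(\N)$. Given $g\in G_{(A\cap B)}$, choose a clopen neighbourhood $U$ of $A\setminus B$ that avoids $B$ and $(B)g^{\pm1}$, and write $g$ as a product of elements fixing $A$ or $B$ pointwise. This is done by first correcting where $g$ sends $A\setminus B$ using an element supported away from $B$, which high transitivity on the complement of $B$ allows.

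Now let $H$ be an open proper subgroup. Then $H\supseteq G_{(F)}$ for some finite $F$, and I choose $F$ of minimal size. If some $h\in H$ has $Fh\neq F$, then $H\supseteq\langle G_{(F)},G_{(Fh)}\rangle=G_{(F\cap Fh)}$, contradicting minimality. Hence $H\le G_{\{F\}}$, and $F\neq\emptyset$ because $H$ is proper. Conversely, $G_{\{F\}}$ is maximal. Indeed, if $g\notin G_{\{F\}}$, the lemma together with high transitivity lets $\langle G_{\{F\}},g\rangle$ contain $G_{(F')}$ for successively smaller $F'$, and finally all of $G$. Thus the open maximal subgroups are exactly the $G_{\{F\}}$ for nonempty finite $F$. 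These are pairwise distinct, since $F$ is recovered as the set of points with finite orbit, so there are exactly $\aleph_0$ of them.

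For (2), the upper bound holds because the pointwise topology is second countable, as a subspace of $\Q^\Q$ with the product topology. Hence there are at most $\mathfrak c$ closed sets, and so at most $\mathfrak c$ closed maximal subgroups. The lower bound is the main obstacle: it needs $\mathfrak c$ closed maximal subgroups that are not of the form in (1). My first attempt would use the irrational points. For each irrational $r$, let $M_r$ be the set of $g$ that preserve the ``germ at $r$''. That is, $g$ maps some set $(r-\varepsilon,r)\cap\Q$ into the left or right side of $r$ in a way compatible with the filter $\mathcal F_r$ of traces on $\Q$ of neighbourhoods of $r$. More precisely, $M_r$ is the stabiliser of $\mathcal F_r$, possibly refined to a one-sided filter, under the induced action on filters.

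I would then check the following. First, $M_r$ is proper, using a homeomorphism that swaps a clopen piece near $r$ with one far away. Second, $M_r$ is maximal: I would apply the filter machinery the paper develops in Section~\ref{section:filters}, adding an element outside $M_r$ and showing that the generated group emulates enough partition permutations to be everything. Third, $M_r$ is closed, which I would try to verify by expressing membership through conditions on countably many points. Finally, $r\mapsto M_r$ is injective. If closedness fails for this choice, I would instead pick the filters from Section~\ref{section:filters} that are countably generated by clopen sets, since their stabilisers are more likely to be closed.
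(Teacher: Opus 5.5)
\emph{Verdict: (1), (3) and the upper bound in (2) are fine, but the lower bound in (2) has a genuine gap: your candidate subgroups are dense, so they are not closed.}

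\textbf{Parts (1) and (3).} Part (3) and the upper bound in (2) match the paper. For (1) you take a different but sound route. The paper (\cref{maxopen}) shows that an open maximal subgroup must have a finite orbit, so it is a setwise stabiliser of a finite set, and it gets the lower bound from point stabilisers. Your minimal-$|F|$ argument, together with maximality of every $G_{\{F\}}$, gives the sharper statement that the open maximal subgroups are exactly the $G_{\{F\}}$ with $F$ finite and nonempty.

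One step of your auxiliary lemma needs repair. A clopen $U\supseteq A\setminus B$ avoiding $(B)g^{-1}$ need not exist: let $g$ swap some $a\in A\setminus B$ with some $b\in B\setminus A$. So a single correction by an element fixing $B$ pointwise can fail. Do it in two steps instead. First use an element of $G_{(A)}$ to move the points of $(A\setminus B)g$ that lie in $B\setminus A$ off $A\cup B$. Then use an element of $G_{(B)}$ to send the resulting tuple back to $A\setminus B$. Only high transitivity is needed.

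\textbf{The gap in (2): your $M_r$ are not closed.} Given a finite $S\subseteq\Q$, choose $\varepsilon$ with $r\pm\varepsilon$ irrational and $U=(r-\varepsilon,r+\varepsilon)\cap\Q$ disjoint from $S$. Every homeomorphism fixing $U$ pointwise preserves $\mathcal F_r$ and any one-sided refinement of it. These homeomorphisms act highly transitively on the clopen set $\Q\setminus U\supseteq S$. Hence $M_r$ is highly transitive, so it meets every nonempty basic open set $\{f:(x_i)f=y_i \text{ for } i<n\}$. Thus $M_r$ is dense, and a dense proper subgroup is not closed.

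Maximality is also unsupported. $\mathcal F_r$ is not a TM ultrafilter, since neither the topological moiety $(-\infty,r)\cap\Q$ nor its complement belongs to it. So \cref{theorem-stabilisers-clopen-maximal} does not apply, and the one-sided versions fail similarly.

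\textbf{The fallback does not help.} A TM ultrafilter that is not the neighbourhood filter of a point contains, for every finite $S$, a clopen member disjoint from $S$. The same argument then shows its stabiliser is highly transitive, hence dense. The neighbourhood filters of points are exactly the countably generated TM ultrafilters on $\Q$, and they give only the $\aleph_0$ point stabilisers.

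\textbf{The missing idea.} A proper closed subgroup in this topology can never be highly transitive, so you need subgroups that preserve a finite structure. The paper uses the stabilisers $\Stab(P)$ of finite partitions $P$ of $\Q$ into clopen sets.
\begin{itemize}
\item They are closed, because $f\notin\Stab(P)$ is witnessed by the images of two points.
\item They are maximal by \cref{closedstab}. For $h\notin\Stab(P)$, conjugation gives $\Homeo(\Q)_Y\le\genset{\Stab(P),h}$ for a clopen $Y$ meeting two parts, and then \cref{smallsuppgen} is applied repeatedly.
\item The partitions $\{U,\Q\setminus U\}$ alone already give $\mathfrak c$ distinct such subgroups.
\end{itemize}
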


The main potential argument against choosing the pointwise topology for $\Homeo(\Q)$ is that it ``ignores'' the usual topology on $\Q$. In particular, the \emph{evaluation map} \((q,h)\mapsto (q)h\) from \(\Q\times \Homeo(\Q)\) to \(\Q\) is not continuous under the pointwise topology on $\Homeo(\Q)$ and the natural topology on $\Q$.
Having a continuous evaluation map would clearly be desirable. However, there is a trade-off. In \cite{Concilio}, it is shown that the least Hausdorff group topology on \(\Homeo(\Q)\) such that the evaluation map is continuous contains the closed-open topology, which is generated by the sets $$U_{A,B}:=\{h\in \Homeo(\Q): (A)h\subseteq B\},$$ where $A$ is a closed subset of $\Q$ and $B$ is an open subset of $\Q$. For each $r\in \mathbb R\setminus \Q$ let $$A_r^{-}=\makeset{x\in \Q}{$x<r$}\qquad \hbox{ and } \qquad A_r^{+}=\makeset{x\in \Q}{$x>r$}.$$
For irrational numbers $a,r$ define the set \[W_{a,r}= U_{A^{-}_{a},A^{-}_r}\cap U_{A^{+}_{a},A^{+}_r}= 
\makeset{f\in \Homeo(\Q)}{\((A^-_{a}) f=A^-_{r}\)}.\]
Then for a fixed $a\in\mathbb R\setminus\mathbb Q$, the family $\makeset{W_{a,r}}{$r\in\mathbb{R}\setminus\mathbb{Q}$}$ consists of pairwise disjoint sets that are open with respect to the closed-open topology. 
It follows that group topologies on \(\Homeo(\Q)\) under which the evaluation map is continuous are not separable. Nevertheless, we consider such topologies and obtain the following result. 
\begin{restatable}{theorem}{BBB}
\label{maximal-clopen-open}
Let \(\operatorname{Homeo}(\Q)\) be equipped with a group topology such that the evaluation map \((h,q)\to (q)h\) from \((\Q\times \operatorname{Homeo}(\Q) )\to \Q\) is continuous when $\Q$ is given its usual topology.
Then \(\operatorname{Homeo}(\Q)\) has 
\begin{enumerate}[\rm(1)]
\item at least \(\mathfrak{c}\) open maximal subgroups;
\item \(2^{\mathfrak{c}}\) closed maximal subgroups. 
\end{enumerate}
\end{restatable}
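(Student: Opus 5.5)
The plan is to first show that any topology $\tau$ as in the statement is finer than the closed-open topology, so that the sets $W_{a,r}$ are $\tau$-open. Then both parts reduce to exhibiting maximal subgroups that contain a suitable open subgroup, because an open subgroup of a topological group is also closed.

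\textbf{Step 1 (the topology is Hausdorff and contains the closed-open topology).} Let $f\neq \mathrm{id}$. Then there is $q\in\Q$ with $(q)f\neq q$. Choose disjoint open sets $V\ni q$ and $V'\ni (q)f$ in $\Q$. By continuity of evaluation at $(q,\mathrm{id})$ there is a neighbourhood $N$ of $\mathrm{id}$ with $(q)N\subseteq V$, and so $f\notin N$. Hence $\tau$ is $T_1$, and since it is a group topology it is Hausdorff. By the result of \cite{Concilio} quoted before the statement, the least Hausdorff group topology making evaluation continuous contains the closed-open topology. I will read this as saying that every such Hausdorff group topology contains it; this is the point in Step 1 I would double-check in \cite{Concilio}. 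Consequently, for every irrational $r$, the set stabiliser
\[S_r=\makeset{f\in\Homeo(\Q)}{$(A^-_r)f=A^-_r$}=W_{r,r}\]
is a $\tau$-open subgroup. Any subgroup containing $S_r$ is then open, and hence closed.

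\textbf{Step 2 (part (1)).} For each $r\in\mathbb R\setminus\Q$ consider the stabiliser $M_r$ of the partition $\{A^-_r,A^+_r\}$, which is allowed to swap the two parts. It contains $S_r$, so it is open by Step 1. Note that $A^-_r$ and $A^+_r$ are topological moieties of $\Q$. I would prove $M_r$ is maximal with the standard moiety argument. Given $g\notin M_r$, some moiety piece $A^\pm_r\cap (A^\pm_r)g$ splits nontrivially. Using that $\Q$ is homeoclastic and that $\Homeo(\Q)$ emulates itself, one shows that $\langle M_r,g\rangle$ contains homeomorphisms mapping any finite partition into moieties onto any other. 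Fullness then lets one patch these together to obtain every homeomorphism. For $r\neq s$ the subgroups $M_r$ and $M_s$ are distinct, because the cuts differ. This gives at least $\mathfrak c$ open maximal subgroups.

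\textbf{Step 3 (part (2)).} By cardinality, $|\Homeo(\Q)|=\mathfrak c$, so there are at most $2^{\mathfrak c}$ maximal subgroups, and it suffices to find $2^{\mathfrak c}$ closed ones. By Step 1 it is enough to find $2^{\mathfrak c}$ maximal subgroups each containing $S_r$ for one fixed $r$. I would rerun the construction behind \cref{bigmaxtheorem}, which I expect to build a maximal subgroup from each suitable ultrafilter or filter on the clopen sets and moieties, $2^{2^{\aleph_0}}$ of them. I would restrict that construction to the family of filters for which $A^-_r$ is treated compatibly, for instance filters concentrated on subsets of $A^+_r$. This family is still indexed by $2^{\mathfrak c}$ ultrafilters, via $A^+_r\cong\Q$, and the resulting subgroups contain every homeomorphism preserving $A^-_r$.

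\textbf{Main obstacle.} The hard step is the last one. One must check that the maximal subgroups produced in the proof of \cref{bigmaxtheorem} can be chosen to contain $S_r$ without collapsing their number or losing maximality, that is, that the ultrafilter invariant is insensitive to what happens inside $A^-_r$. If the construction does not directly allow this, my fallback is to take $2^{\mathfrak c}$ distinct maximal subgroups $M$ of $\Homeo(A^+_r)\cong\Homeo(\Q)$. I would then extend each $M\times\Homeo(A^-_r)$, together with a fixed swap, to a maximal subgroup of $\Homeo(\Q)$ containing $S_r$, and show that distinct $M$ yield distinct extensions.
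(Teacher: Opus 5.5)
\textbf{The gap is in Step 3.} You try to get closedness from openness by finding $2^{\mathfrak c}$ maximal subgroups that all contain the open subgroup $S_r$ for one fixed $r$. This cannot work, because only finitely many subgroups of $\Homeo(\Q)$ contain $S_r$.

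Here is why. $\Homeo(\Q)$ acts transitively on the set $\mathcal C$ of proper nonempty clopen subsets of $\Q$, and $S_r$ is the stabiliser of $A^-_r$. So a subgroup $H\geq S_r$ is determined by the $S_r$-invariant set $(A^-_r)H\subseteq\mathcal C$, namely $H=\{g : (A^-_r)g\in (A^-_r)H\}$. The $S_r$-orbit of $V\in\mathcal C$ depends only on which of the four sets $A^-_r\cap V$, $A^-_r\setminus V$, $V\setminus A^-_r$, $\Q\setminus(A^-_r\cup V)$ are empty. This is because the nonempty ones are all homeomorphic to $\Q$, and homeomorphisms between corresponding pieces can be glued. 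Hence there are at most $16$ such orbits and at most $2^{16}$ overgroups of $S_r$.

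Both of your routes fail concretely:
\begin{itemize}
\item Your fallback, which asks for distinct extensions containing $S_r$ for $2^{\mathfrak c}$ distinct $M$, is impossible by this count.
\item Your ultrafilter variant fails too. Suppose $A^+_r\in\F$. Split $A^+_r$ into two moieties; exactly one of them lies in $\F$. Swap them while fixing $A^-_r$ pointwise. The resulting homeomorphism lies in $S_r\setminus G_\F$, so $G_\F\not\supseteq S_r$.
\end{itemize}

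\textbf{The missing idea is that you do not need openness at all:} the TM-ultrafilter stabilisers are closed directly. As in the proof of \cref{theorem-stabilisers-clopen-maximal}, $h\notin G_\F$ if and only if $(U)h\notin\F$ for some topological moiety $U\in\F$. Therefore
\[
\Homeo(\Q)\setminus G_\F=\bigcup\bigl\{\{h:(U)h=V\} : U\in\F \text{ a topological moiety},\ V \text{ clopen},\ V\notin\F\bigr\}.
\]
Each set $\{h:(U)h=V\}=U_{U,V}\cap U_{\Q\setminus U,\Q\setminus V}$ is $\tau$-open by exactly the fact from your Step 1, applied to arbitrary clopen $U,V$ rather than only $U=V=A^-_r$. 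So every $G_\F$ is $\tau$-closed, and \cref{2chomeoclast} together with \cref{theorem-stabilisers-clopen-maximal} gives $2^{\mathfrak c}$ closed maximal subgroups. This is the paper's proof of (2).

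\textbf{Part (1) is essentially the paper's argument,} using stabilisers of finite clopen partitions, which are open because they contain an open set-stabiliser. The one weakness is that your maximality of $M_r$ is only asserted. Fullness is not inherited by $\langle M_r,g\rangle$, so the patching step is unavailable. It is also unnecessary: since $M_r\supseteq S_r$, transitivity of $\langle M_r,g\rangle$ on proper nonempty clopen sets already forces $\langle M_r,g\rangle=\Homeo(\Q)$. Producing that transitivity from a single $g\notin M_r$ is the real content, and that is \cref{closedstab}, proved via \cref{smallsuppgen}.
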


\section{Topological preliminaries}\label{section:preliminaries}

The proof of the following two lemmas are straightforward, and so are left for the reader.
\begin{lemma}\label{trans}
Let $A$ be a topological moiety of a space $X$ and $B$ be a topological moiety of $A$. Then $B$ is a topological moiety of $X$.    
\end{lemma}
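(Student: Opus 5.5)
We need to check the three conditions for $B$ to be a topological moiety of $X$: $B$ is clopen in $X$, and the spaces $B$, $X\setminus B$ and $X$ are pairwise homeomorphic. We do this directly from the definitions, fixing homeomorphisms $\alpha\colon A\to X$, $\beta\colon X\setminus A\to X$ (which exist since $A$ is a topological moiety of $X$), and $\gamma\colon B\to A$, $\delta\colon A\setminus B\to A$ (since $B$ is a topological moiety of $A$).

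\emph{Clopenness.} Since $B$ is clopen in $A$ and $A$ is clopen in $X$, we can write $B=U\cap A=F\cap A$ with $U$ open and $F$ closed in $X$. Intersecting with the open and closed set $A$ shows that $B$ is both open and closed in $X$.

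\emph{$B\cong X$.} The composite $\gamma\alpha\colon B\to X$ is a homeomorphism.

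\emph{$X\setminus B\cong X$.} This is the only step needing a small construction. Write $X\setminus B=(A\setminus B)\sqcup(X\setminus A)$. Both pieces are clopen in $X$, by the previous step applied to $A\setminus B$ (which is clopen in $A$) and since $A$ is a topological moiety. Hence $X\setminus B$ is the topological sum of these two pieces. Similarly $X=A\sqcup(X\setminus A)$ is a topological sum of clopen pieces. Define $h\colon X\setminus B\to X$ by
\[
h{\restriction}_{A\setminus B}=\delta\alpha\beta^{-1}\colon A\setminus B\to X\setminus A,
\qquad
h{\restriction}_{X\setminus A}=\beta\alpha^{-1}\colon X\setminus A\to A .
\]
Each of these is a homeomorphism between the indicated pieces. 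Their images, $X\setminus A$ and $A$, partition $X$. Since all the pieces are clopen, $h$ is a bijection that is continuous with continuous inverse, so it is a homeomorphism. Alternatively, one can simply map $A\setminus B\to A$ by $\delta$ and $X\setminus A\to X\setminus A$ by the identity, which gives $X\setminus B\cong A\sqcup(X\setminus A)=X$ even more directly.

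Pairwise homeomorphism then follows by transitivity. The only point requiring care, which is mild, is that gluing homeomorphisms along a partition into clopen sets yields a homeomorphism. This holds because the topology of a space partitioned into clopen sets is the disjoint-sum topology.
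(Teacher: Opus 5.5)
Your proof is correct. The paper leaves this lemma to the reader, and your direct check is exactly the routine verification intended: $B$ is clopen in $X$ because $A$ is clopen in $X$; $B\cong A\cong X$; and $X\setminus B=(A\setminus B)\sqcup(X\setminus A)\cong A\sqcup(X\setminus A)=X$ by gluing homeomorphisms along clopen pieces. Your simpler gluing ($\delta$ on $A\setminus B$, the identity on $X\setminus A$) suffices on its own; the first construction is valid but unnecessary.
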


\begin{lemma}\label{transactonmoieties}
Let $A$ and \(B\) be topological moieties of a space $X$. If \(G\) is a group acting on \(X\) which emulates \(\Homeo(X)\), then there is \(g\in G\) with \((A)g=B\). 
\end{lemma}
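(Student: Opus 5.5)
The plan is to build $g$ from a single bijection between two-element partitions. Since $A$ is a topological moiety of $X$, its complement $X\setminus A$ is clopen, and $A$, $X\setminus A$ and $X$ are pairwise homeomorphic. By the remark following the definition of topological moiety, $X\setminus A$ is then itself a topological moiety of $X$. The same holds for $B$ and $X\setminus B$.

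Set $\mathcal P=\{A,X\setminus A\}$ and $\mathcal Q=\{B,X\setminus B\}$. Both are partitions of $X$ into topological moieties: moieties are nonempty because $X$ is nonempty, and $A$ is disjoint from $X\setminus A$, so each of $\mathcal P$ and $\mathcal Q$ has exactly two elements. Thus $|\mathcal P|=|\mathcal Q|=2$.

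Define the bijection $\phi:\mathcal P\to\mathcal Q$ by $A\phi=B$ and $(X\setminus A)\phi=X\setminus B$. Because $G$ emulates $\Homeo(X)$, there exists $g_\phi\in G$ with $(P)g_\phi=(P)\phi$ for every $P\in\mathcal P$. In particular $(A)g_\phi=B$, so $g=g_\phi$ is the required element.

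The only point needing care is that $\mathcal P$ and $\mathcal Q$ really are partitions into topological moieties with two members each. This follows directly from the definitions together with the complement remark, so I expect no step to be a genuine obstacle.
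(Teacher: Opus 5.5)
Your proposal is correct, and it is the natural argument. The paper leaves this lemma to the reader, and applying emulation to the two-element partitions $\{A, X\setminus A\}$ and $\{B, X\setminus B\}$, with the bijection $A\mapsto B$, is exactly the routine check intended (nonemptiness of $X$ is implicit, since emulation is only defined for homeoclastic spaces).
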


The $2^{\mathfrak c}$ many maximal subgroups of $\Sym(\N)$ given in \cite{subgroups_macpherson_neumann} are the stabilisers of ultrafilters on $X$. In this paper, we similarly construct maximal subgroups of homeomorphism groups using suitable filters on the underlying spaces. Recall that a collection $\mathcal{F}$ of subsets of $X$ is a \emph{filter} on $X$ if  $\emptyset \notin \mathcal{F}$, and $\F$ is closed under finite intersections and taking supersets. A subset $\mathcal B$ of a filter $\F$ is a {\em subbase} for $\F$ if for each $F\in \F$ there exists a finite family $\mathcal C\subseteq \mathcal B$ such that $\bigcap \mathcal C\subseteq F$. A filter $\mathcal{F}$ on $X$ is an \emph{ultrafilter} if for every $A \subseteq X$ either $A \in \mathcal{F}$ or $X \setminus A \in \mathcal{F}$. 

A filter $\F$ on a space $X$ is called
\begin{itemize}
\item {\em a topological moiety filter}, abbreviated as {\em TM filter}, if $\F$ possesses a subbase consisting of topological moieties of $X$;
\item {\em a topological moiety ultrafilter}, abbreviated as {\em TM ultrafilter}, if $\F$ is a topological moiety filter, and for every topological moiety $A$ of $X$ either $A\in \F$ or $X\setminus A\in\F$.
\end{itemize}

Recall that a filter on a set $X$ is an ultrafilter if and only if it is maximal among the set of all filters on $X$ with respect to containment. We now show that the analogue holds for TM filters. 

\begin{lemma}\label{lem:maximalultrafilters}
    A TM filter on a topological space \(X\) is a TM ultrafilter if and only if it is maximal among the TM filters on \(X\).
\end{lemma}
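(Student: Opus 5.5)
We prove the two implications separately, following the classical argument for ultrafilters on sets. The one new ingredient is that we may only add topological moieties to a filter, so every filter we build must have a subbase of moieties. Throughout we use that the complement of a topological moiety is again a topological moiety, as noted after the definition.

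\emph{TM ultrafilter $\Rightarrow$ maximal.} Let $\F$ be a TM ultrafilter and $\mathcal G\supseteq \F$ a TM filter. Suppose $G\in\mathcal G\setminus\F$. Since $\mathcal G$ is a TM filter, there are topological moieties $A_1,\dots,A_n\in\mathcal G$ with $A_1\cap\dots\cap A_n\subseteq G$. If every $A_i$ lay in $\F$, then $G\in\F$, a contradiction; so some $A_i\notin\F$. Because $\F$ is a TM ultrafilter, $X\setminus A_i\in\F\subseteq\mathcal G$. Then $\emptyset=A_i\cap(X\setminus A_i)\in\mathcal G$, which is impossible. Hence $\mathcal G=\F$.

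\emph{Maximal $\Rightarrow$ TM ultrafilter.} Let $\F$ be maximal among TM filters and $A$ a topological moiety with $A\notin\F$. We want to show $X\setminus A\in\F$.
\begin{enumerate}
\item Fix a subbase $\mathcal B\subseteq\F$ of moieties. Form the collection $\mathcal G$ of all supersets of sets $F\cap A$ with $F\in\F$.
\item Suppose first that $F\cap A\neq\emptyset$ for all $F\in\F$. Then $\mathcal G$ is a filter. It has the subbase $\mathcal B\cup\{A\}$, consisting of moieties, so it is a TM filter. It properly contains $\F$, since $A\in\mathcal G\setminus\F$. This contradicts maximality.
\item Therefore some $F\in\F$ satisfies $F\cap A=\emptyset$. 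Then $F\subseteq X\setminus A$, and so $X\setminus A\in\F$ because $\F$ is upward closed.
\end{enumerate}
This shows that $\F$ is a TM ultrafilter.

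There is no real obstacle here. The only point needing care is in step 2: checking that the enlarged filter is again a TM filter. This is immediate, since $\mathcal B\cup\{A\}$ consists of moieties and every member of $\mathcal G$ contains a finite intersection of elements of this family.
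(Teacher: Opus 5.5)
Your proof is correct and follows essentially the same route as the paper. For maximal $\Rightarrow$ ultrafilter, both arguments enlarge $\F$ by adjoining the moiety $A$ (subbase $\mathcal B\cup\{A\}$); the paper phrases this as a contradiction assuming neither $A$ nor $X\setminus A$ lies in $\F$, while you argue directly. For ultrafilter $\Rightarrow$ maximal, both arguments obtain $\emptyset$ from a moiety and its complement, and you additionally spell out the subbase reduction that the paper leaves implicit.
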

\begin{proof}
\((\Leftarrow):\) Let \(\mathcal{F}\) be a maximal TM filter, and $\mathcal B$ be a subbase of $\F$ consisting of topological moieties.
Suppose for a contradiction that there is a partition \(\{A,B\}\) of \(X\) into topological moieties such that neither \(A\) nor \(B\) belongs to \(\mathcal{F}\). Since $B\notin \F$, we get that \(\mathcal{F}\) contains no subset of \(B\). 
It follows that every element of \(\mathcal{F}\) intersects \(A\).
So \[\mathcal{F}'=\makeset{S}{there is $F\in \mathcal{F}$ with \(F\cap A \subseteq S\)}\] is a filter with subbase \(\mathcal{B}\cup \{A\}\). As \(A\) was a topological moiety, it follows that \(\mathcal{F}'\) is a TM filter strictly containing \(\mathcal{F}\), we have reached a contradiction.

\((\Rightarrow):\) Let \(\mathcal{F}\) be a TM ultrafilter and \(\mathcal{F}'\) be a TM filter containing \(\mathcal{F}\). 
If \(\mathcal{F}\) contains all the topological moieties in \(\mathcal{F}'\), then it would follow \(\mathcal{F}=\mathcal{F}'\).
Suppose for a contradiction that \(U\in \mathcal{F}' \backslash \mathcal{F}\) is a topological moiety. Since $\F$ is a TM ultrafilter, either \(U\in \mathcal{F}\) or \(X\setminus U\in \mathcal{F}\). Thus \(X\backslash U\in \mathcal{F}\). As \(\mathcal{F} \subseteq \mathcal{F}'\), it follows that both \(U\) and its complement belong to \(\mathcal{F}'\). Then \(\varnothing=U\cap (X\setminus U)\in \mathcal{F}'\), a contradiction.
\end{proof}

It is a well known result that every infinite set $X$ has $2^{2^{|X|}}$ many ultrafilters, see Theorem~3.6.11 from \cite{Eng}. The next lemma is a generalisation of this fact, if we view an infinite set $X$ as a discrete space.

\begin{lemma}\label{2chomeoclast}
Each \(\kappa\)-homeoclasitc space \(X\) possesses at least  $2^{2^{\kappa}}$ distinct TM ultrafilters.   
\end{lemma}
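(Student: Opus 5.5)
We transfer the classical count of ultrafilters on $\kappa$ to TM ultrafilters on $X$, using a partition of $X$ into $\kappa$ topological moieties. Fix a partition $\{M_\alpha : \alpha<\kappa\}$ of $X$ into topological moieties, which exists by \cref{def:kappaclast}.

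For $S\subseteq\kappa$ set $M_S=\bigcup_{\alpha\in S}M_\alpha$. Since the $M_\alpha$ are clopen and partition $X$, every $M_S$ is clopen with complement $M_{\kappa\setminus S}$.

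\textbf{Key claim.} If $S$ and $\kappa\setminus S$ are both nonempty, then $M_S$ is a topological moiety of $X$.

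To prove the claim, fix $\alpha\in S$. The set $M_S$ is a clopen subspace containing the moiety $M_\alpha$, so condition (4) of \cref{clastic} gives $M_S\cong X$. The same argument gives $X\setminus M_S=M_{\kappa\setminus S}\cong X$, so $M_S$ is a topological moiety.

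\textbf{Transfer of ultrafilters.} For each ultrafilter $\mathcal U$ on $\kappa$, we will produce a TM ultrafilter $\F_{\mathcal U}$. First take only nonprincipal (indeed uniform) $\mathcal U$. Then every $S\in\mathcal U$ is infinite, and $\kappa\setminus S$ may be empty only when $S=\kappa$. Hence $\mathcal B_{\mathcal U}=\{M_S: S\in\mathcal U,\ S\neq\kappa\}$ consists of topological moieties. It is closed under finite intersections, since $M_S\cap M_T=M_{S\cap T}$, and it avoids the empty set. So $\mathcal B_{\mathcal U}$ generates a TM filter $\F_{\mathcal U}$.

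\textbf{Distinctness.} If $\mathcal U\neq\mathcal V$, choose $S\in\mathcal U$ with $\kappa\setminus S\in\mathcal V$. Then $M_S\in\F_{\mathcal U}$ and $X\setminus M_S\in\F_{\mathcal V}$, so the two filters cannot both lie inside a single proper filter.

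\textbf{Extension to TM ultrafilters.} Apply Zorn's lemma to the TM filters containing $\F_{\mathcal U}$. A union of a chain of TM filters is a TM filter, because the union of their moiety subbases is a subbase. By \cref{lem:maximalultrafilters}, a maximal element is a TM ultrafilter $\F^*_{\mathcal U}\supseteq\F_{\mathcal U}$. Distinct $\mathcal U$ give incompatible $\F_{\mathcal U}$, so they give distinct $\F^*_{\mathcal U}$.

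\textbf{Counting.} There are $2^{2^\kappa}$ uniform ultrafilters on $\kappa$, which is the standard Pospíšil/Hausdorff count, as in \cite{Eng}. This yields at least $2^{2^\kappa}$ TM ultrafilters.

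\textbf{Main obstacle.} The delicate point is the key claim, specifically that a union of members of the partition is homeomorphic to $X$. Condition (4) handles this cleanly because it needs only clopenness together with containment of a moiety. Without (4) one would have to check directly that an arbitrary union of copies of $X$ is homeomorphic to $X$, which need not hold. The only other thing to verify is that Zorn's lemma applies, that is, that chains of TM filters have TM upper bounds, and this is immediate.
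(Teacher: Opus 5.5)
Your argument is correct, but you build the TM ultrafilters differently from the paper. The paper fixes a selector $a_\alpha\in A_\alpha$ and lets $\F_u$ be generated by all topological moieties $W$ with $\{\alpha : a_\alpha\in W\}\in u$. Since every moiety $W$ or its complement passes this test, $\F_u$ is already a TM ultrafilter, so the paper needs no further Zorn argument and no appeal to \cref{lem:maximalultrafilters}, and $u\mapsto\F_u$ is an explicit injection.

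Your base $\{M_S : S\in\mathcal U,\ S\neq\kappa\}$ only sees moieties that are unions of partition pieces. A moiety meeting each $M_\alpha$ in a proper nonempty clopen piece is decided neither way, which is why you need the Zorn extension. You handle that step and the incompatibility argument for distinctness correctly, at the cost of a non-canonical choice of $\F^*_{\mathcal U}$. In return, your key claim makes explicit, via condition (4), a fact the paper's distinctness step uses without comment: $\bigcup_{n\in U}A_n$ is a topological moiety and hence lies in $\F_u$.

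One small point: restricting to uniform ultrafilters is unnecessary. For any ultrafilter $\mathcal U$ on $\kappa$, the conditions $S\in\mathcal U$ and $S\neq\kappa$ already make both $S$ and $\kappa\setminus S$ nonempty. So your construction works for all ultrafilters on $\kappa$, and the plain count of all ultrafilters (the one the paper cites) suffices. You do not need the stronger, though true, Posp\'{\i}\v{s}il count of uniform ones.
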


\begin{proof}

Let $\kappa$ be an infinite cardinal and $X$ be a  \(\kappa\)-homeoclasitc space.  By definition, there exists a partition \(\A=\makeset{A_\alpha}{$\alpha<\kappa$}\) of $X$ into topological moieties . 
Fix a selector \(\makeset{a_\alpha}{$\alpha \in\kappa$}\), where $a_\alpha\in A_\alpha$. 
For each ultrafilter $u$ on the set $\kappa$, let $\F_u$ be the filter on $X$ that is generated by the subbase $$\{W\subseteq X: W \hbox{ is a topological moiety and } \{\alpha\in \kappa: a_\alpha\in W\}\in u\}.$$ 
Let us show that $\F_u$ is a TM ultrafilter. This filter is a TM filter by definition. Fix any topological moiety $A$ of $X$. If $T=\{\xi\in\kappa: a_\xi\in A\}\in u$, then $A\in\F_u$. If $T\notin u$, then $\{\xi\in\kappa: a_\xi\in X\setminus A\}=\kappa\setminus T\in u$. In the latter case, $X\setminus A\in\F_u$. Hence $\F_u$ is a TM ultrafilter. 

Fix distinct ultrafilters $u,v$ on $\kappa$. There exist $U\in u$ and $V\in v$ such that $U\cap V=\emptyset$. Note that $$\bigcup_{n\in U}A_n\in \F_u, \qquad \bigcup_{n\in V}A_n\in \F_v,\qquad \hbox{and}\qquad \left(\bigcup_{n\in U}A_n \right)\cap\left(\bigcup_{n\in V}A_n\right)=\emptyset.$$ It follows that $\F_u\neq \F_v$. 
 Thus there are at least as many TM ultrafilters on \(X\) as there are ultrafilters on \(\kappa\).
Theorem~3.6.11 from \cite{Eng} implies that there are \(2^{2^\kappa}\) ultrafilters on $\kappa$.
\end{proof}

The following proposition can be used to construct $\kappa$-homeoclastic spaces
for arbitrarily large cardinals $\kappa$, as mentioned in \cref{example}.
\begin{proposition}\label{kappa}
    Suppose that \(\kappa\) is an infinite cardinal with the discrete topology and \(X\) is an infinite Hausdorff topological space with a basis of clopen sets such that every nonempty clopen subspace of \(X\) is homeomorphic to \(X\).
    If for all cardinals \(\gamma <\kappa\) with the discrete topology we have that \(\gamma \times X \) is not homeomorphic to \(\kappa \times X\), then \(\kappa \times X\) is a \(\kappa\)-homeoclastic space.
\end{proposition}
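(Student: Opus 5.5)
Throughout, write \(Y=\kappa\times X\), and for a clopen set \(C\subseteq Y\) and \(\alpha<\kappa\) write \(C_\alpha=C\cap(\{\alpha\}\times X)\) and \(S_C=\makeset{\alpha<\kappa}{\(C_\alpha\neq\varnothing\)}\). The plan is to first classify the clopen subspaces of \(Y\) up to homeomorphism, and then check the four conditions of Definition~\ref{clastic} together with the partition requirement of Definition~\ref{def:kappaclast}.

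\textbf{Step 1: describing clopen subsets.} Let \(C\subseteq Y\) be clopen. Since \(\kappa\) is discrete, each slice \(\{\alpha\}\times X\) is clopen in \(Y\) and homeomorphic to \(X\). Hence \(C_\alpha\) is clopen in \(\{\alpha\}\times X\), and when nonempty it is homeomorphic to \(X\) by hypothesis. Also \(C\) is the topological sum of the \(C_\alpha\). Therefore \(C\cong S_C\times X\cong |S_C|\times X\). Conversely, any union of clopen pieces, at most one inside each slice, is clopen in \(Y\). Combining this with the hypothesis that \(\gamma\times X\not\cong\kappa\times X\) for \(\gamma<\kappa\), we obtain:
\begin{itemize}
\item[] a nonempty clopen \(C\) is homeomorphic to \(Y\) if and only if \(|S_C|=\kappa\);
\item[] \(C\) is a topological moiety if and only if \(|S_C|=|S_{Y\setminus C}|=\kappa\).
\end{itemize}
This step is the heart of the argument and the only place where the non-homeomorphism hypothesis is used. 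The main care needed is the observation that the homeomorphism type of a topological sum of copies of \(X\) depends only on the number of summands.

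\textbf{Step 2: the homeoclastic conditions.}

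Condition (1): \(Y\) is Hausdorff, being a product of Hausdorff spaces.

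Condition (2): the sets \(\{\alpha\}\times U\), with \(U\) clopen in \(X\), form a basis of \(Y\). Fix \(\alpha\) and a clopen \(U\subseteq X\). Split \(\kappa\setminus\{\alpha\}=K_1\sqcup K_2\) with \(|K_1|=|K_2|=\kappa\), and set
\[M_i=(\{\alpha\}\times U)\cup(K_i\times X),\qquad i=1,2.\]
By Step 1 each \(M_i\) is a topological moiety, since both \(M_i\) and its complement meet \(\kappa\) many slices. Moreover \(M_1\cap M_2=\{\alpha\}\times U\), so the topological moieties form a subbasis.

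Condition (3): if \(\{A,B\}\) is a clopen partition of \(Y\), then \(S_A\cup S_B=\kappa\). Since \(\kappa\) is infinite, one of \(S_A,S_B\) has size \(\kappa\), so by Step 1 that piece is homeomorphic to \(Y\).

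Condition (4): first suppose a clopen \(C\) contains a topological moiety \(M\). Then \(S_C\supseteq S_M\), so \(|S_C|=\kappa\) and \(C\cong Y\). Conversely, suppose \(C\cong Y\), so \(|S_C|=\kappa\). Split \(S_C=T_1\sqcup T_2\) with \(|T_1|=|T_2|=\kappa\) and put \(M=\bigcup_{\alpha\in T_1}C_\alpha\subseteq C\). This set is clopen, \(S_M=T_1\), and \(S_{Y\setminus M}\supseteq\kappa\setminus T_1\supseteq T_2\). Hence \(M\) is a topological moiety inside \(C\).

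\textbf{Step 3: the partition into \(\kappa\) moieties.} Since \(\kappa\cdot\kappa=\kappa\), we can partition \(\kappa\) into \(\kappa\) many sets \(K_\beta\), each of size \(\kappa\). The sets \(K_\beta\times X\) are then clopen, and each they and their complements meet \(\kappa\) many slices. By Step 1 they are topological moieties, and they partition \(Y\). This shows \(Y\) is \(\kappa\)-homeoclastic.
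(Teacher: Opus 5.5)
Your proof is correct and follows essentially the same route as the paper. Both arguments reduce everything to counting the slices $\{\alpha\}\times X$ that a clopen set meets, and both use the non-homeomorphism hypothesis exactly to force a clopen copy of $\kappa\times X$ to meet $\kappa$ slices. Your differences are cosmetic. Isolating the slice count as Step 1 makes the forward direction of condition (4) a one-liner, where the paper instead computes $T\sqcup V\cong\kappa\times X$ directly. Your subbasis sets $(\{\alpha\}\times U)\cup(K_i\times X)$ also avoid needing proper clopen subsets of $X$, which the paper's subbase uses.
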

\begin{proof}
    First we check the four conditions from Definition~\ref{clastic}.
    
    (1) The product space $\kappa \times X$ is Hausdorff since the discrete space $\kappa$ and $X$ are both Hausdorff.

    (2) Let \(B\) be the set of topological moieties of \(X\). Since each non-empty clopen subset of $X$ is homeomorphic to $X$, it follows that $B$ coincides with the family of all proper non-empty clopen subsets of $X$. Since $X$ has a basis of clopen sets, the sets \(\{\alpha\}\times U\) where \(\alpha\in \kappa\) and \(U\in B\) form a basis for \(\kappa \times X\).
    It follows that the family $$S=\makeset{\bigcup_{\alpha\in M}(\{\alpha\}\times U_\alpha)}{\(M \hbox{ is a moiety of }\kappa \hbox{ and } U_\alpha\in B \hbox{ for all }\alpha\in M\)}$$ forms a subbase of $\kappa \times X$. Notice that $S$ consists of topological moieties of $\kappa\times X$. 

    (3) Suppose that \(\{A,B\}\) is a partition of \(\kappa \times X \) into clopen sets. Without loss of generality, suppose that \(I:=\makeset{\alpha\in \kappa}{\((\{\alpha\}\times X)\cap A \neq \emptyset\)}\) has cardinality \(\kappa\). For each \(i\in I\), we have that \((\{i\}\times X)\cap A\) is homeomorphic to a non-empty clopen subspace of \(X\). By the assumption, each of the spaces \((\{i\}\times X)\cap A\)  for \(i\in I\) is homeomorphic to \(X\). As \(A\) is the topological sum of the spaces \((\{i\}\times X)\cap A\) for \(i\in I\) and $|I|=\kappa$, it follows that \(A\) is homeomorphic to \(\kappa \times X\).

(4) Suppose that \(U\) is a clopen subset of \(\kappa \times X\). If \(U\) contains a topological moiety $T$, then $T$ is homeomorphic to $\kappa \times X$ and $V=U \setminus T$ is clopen in $\kappa \times X$. Hence $V$ is a union $V=\bigcup_{\alpha \in \kappa} \{\alpha\} \times V_{\alpha}$ where each $V_{\alpha}$ is clopen (possibly empty) in $X$. If we use $\sqcup$ to denote topological sums, then 
    \[U=T \sqcup V\cong (\sqcup_{\alpha\in \kappa} X)\sqcup (\sqcup_{\alpha\in \kappa} V_{\alpha}) \cong \sqcup_{\alpha\in \kappa} (X\sqcup V_{\alpha})\cong \sqcup_{\alpha\in \kappa} X\cong \kappa \times X.\]
    
    Suppose now that \(U\) is homeomorphic to \(\kappa\times X\). We need to show that \(U\) contains a topological moiety of \(\kappa\times X\).
   For each $\xi\in\kappa$ the set \(U_{\xi}:=U\cap (\{\xi\}\times X)\) is either empty or homeomorphic to \(X\) since it is clopen in \(\{\xi\}\times X \cong X\). 
   Let \(I_U:=\makeset{\xi\in \kappa}{\(U_\xi\neq \emptyset\)}\). 
   Note that \(U\) is homeomorphic to \(I_U \times X\).
   So if \(|I_U|< \kappa\) then, by the assumption, \(U\) is not homeomorphic to \(\kappa \times X\), a contradiction. Thus \(|I_U|=\kappa\). Let \(M\) be a moiety of \(I_U\). 
   Then \(\bigcup_{\xi\in M} U_\xi\) is a topological moiety of \(\kappa \times X\) contained in \(U\), as required.

    To verify that the condition of \cref{def:kappaclast} holds, let \(\makeset{M_\alpha}{$\alpha< \kappa$}\) be a partition of \(\kappa\) into moieties. Then \(\makeset{M_\alpha \times X}{$\alpha< \kappa$}\) is a partition of \(\kappa\times X\) into topological moieties. Hence \(\kappa \times X\) is a \(\kappa\)-homeoclastic space.
\end{proof}

\section{Maximal subgroups via topological moiety ultrafilters}\label{section:filters}


The aim of this section is to prove \cref{bigmaxtheorem}.

\begin{definition}\label{def:supdef}
    Let \(G\) be a group acting on a topological space \(X\), $f\in G$, and \(A\subseteq X\). Then the {\em support} of $f$ is the set of all $x\in X$ such that $(x)f\neq x$, and 
    $G_A$ denotes the subgroup of $G$ consisting of elements whose support is contained in \(A\). 
\end{definition}
\begin{proposition}\label{marvelous-stabilisers}
    If \(X\) is a \(\kappa\)-homeoclastic space, \(U\subseteq X\) is a topological moiety and \(G\) fully emulates \(\Homeo(X)\), then \(G_U\) fully emulates \(\Homeo(U)\) (via the natural action of \(G_U\) on \(U\)).
\end{proposition}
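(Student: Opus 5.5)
The proof has three steps: check that \(U\) is homeoclastic, show that \(G_U\) acts fully on \(U\), and then show that \(G_U\) emulates \(\Homeo(U)\). Since \(U\) is homeomorphic to \(X\), \(U\) is homeoclastic, so the statement makes sense. The action of \(G_U\) on \(U\) is by restriction, \(g\mapsto g{\restriction}_U\); this is well defined because each element of \(G_U\) fixes \(X\setminus U\) pointwise and therefore maps \(U\) onto itself. The recurring tool is gluing: a homeomorphism of \(U\) is extended by the identity on the clopen set \(X\setminus U\), and fullness of \(G\) is applied to the resulting homeomorphism of \(X\).

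\emph{Fullness.} Let \(h\in\Homeo(U)\) and let \(\mathcal C\) be an open cover of \(U\) such that for each \(V\in\mathcal C\) there is \(g_V\in G_U\) with \(h{\restriction}_V=g_V{\restriction}_V\). Define \(\hat h\in\Homeo(X)\) to be \(h\) on \(U\) and the identity on \(X\setminus U\); it is a homeomorphism because \(U\) is clopen. Consider the cover \(\mathcal C\cup\{X\setminus U\}\) of \(X\), which is open since \(U\) is clopen.
\begin{itemize}
\item On each \(V\in\mathcal C\), the map \(\hat h\) agrees with \(g_V\) as elements acting on \(X\).
\item On \(X\setminus U\), the map \(\hat h\) agrees with the identity element of \(G\).
\end{itemize}
By fullness of \(G\) there is \(h'\in G\) acting via \(\hat h\). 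Its support lies in \(U\), so \(h'\in G_U\), and it acts on \(U\) via \(h\).

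\emph{Emulation.} Let \(\mathcal P,\mathcal Q\) be finite partitions of \(U\) into topological moieties of \(U\), with \(|\mathcal P|=|\mathcal Q|\), and let \(\phi:\mathcal P\to\mathcal Q\) be a bijection. By \cref{trans}, every member of \(\mathcal P\cup\mathcal Q\) is a topological moiety of \(X\).
\begin{enumerate}
\item Pick some \(P_0\in\mathcal P\) and form \(\mathcal P'=(\mathcal P\setminus\{P_0\})\cup\{P_0\cup(X\setminus U)\}\). Define \(\mathcal Q'\) in the same way using \(Q_0=(P_0)\phi\). These are finite partitions of \(X\) of equal size.
\item Check that the new blocks are topological moieties of \(X\). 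The set \(P_0\cup(X\setminus U)\) is clopen. It is homeomorphic to \(X\) by condition (4) of \cref{clastic}, since it contains the moiety \(X\setminus U\). Its complement is \(U\setminus P_0\), a clopen set containing a moiety (another block of \(\mathcal P\)), hence also homeomorphic to \(X\) by (4). This needs \(|\mathcal P|\ge 2\); the case \(|\mathcal P|=1\) is impossible because \(U\) is not a moiety of itself. The same argument applies to \(Q_0\cup(X\setminus U)\).
\item Apply emulation in \(G\) to the induced bijection \(\phi':\mathcal P'\to\mathcal Q'\). This gives \(g\in G\) with \((P)g=(P)\phi\) for \(P\neq P_0\), and \((P_0\cup(X\setminus U))g=Q_0\cup(X\setminus U)\). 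The element \(g\) maps \(U\) onto \(U\), but it may move points of \(X\setminus U\).
\end{enumerate}

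The remaining step, which I expect to be the main obstacle, is to correct \(g\) so that it lies in \(G_U\), and fullness handles it. The sets \(P_0\cap (X\setminus U)g^{-1}\), \(P_0\cap U g^{-1}\), and so on, need not be well behaved. So instead of splitting \(P_0\) I will argue as follows. Since \(g\) maps \(P_0\cup(X\setminus U)\) onto \(Q_0\cup(X\setminus U)\) and every other block into \(U\), it follows that \((U)g= U\) holds exactly when \((X\setminus U)g=X\setminus U\). That equality need not hold. To force it, I will replace \(\mathcal P'\) by a refinement: split \(P_0\) into two topological moieties \(P_0^1,P_0^2\) of \(U\), which is possible by (4) and \cref{trans}, and split \(Q_0\) likewise. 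Then take \(X\setminus U\) as a separate block in both partitions of \(X\). All blocks are now moieties of \(X\), and I send \(X\setminus U\mapsto X\setminus U\), \(P\mapsto(P)\phi\), and \(P_0^i\mapsto Q_0^i\). The resulting \(g\) preserves \(X\setminus U\) and \(U\). Finally, the homeomorphism equal to \(g\) on \(U\) and to the identity on \(X\setminus U\) is locally in \(G\) on the clopen cover \(\{U,X\setminus U\}\). Fullness then gives \(g_\phi\in G_U\) acting as required.
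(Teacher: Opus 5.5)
Your final argument is correct and is essentially the paper's proof: adjoin $X\setminus U$ as an extra block sent to itself, apply emulation in $G$, then use fullness on the clopen cover $\{U,X\setminus U\}$ to replace the action on $X\setminus U$ by the identity; your fullness part is handled exactly as in the paper. The detour through the merged block $P_0\cup(X\setminus U)$ is unnecessary (and the assertion in step 3 that $g$ maps $U$ onto $U$ does not hold there, as you note yourself afterwards), and so is splitting $P_0$ and $Q_0$, since $\mathcal P\cup\{X\setminus U\}$ and $\mathcal Q\cup\{X\setminus U\}$ are already equal-size partitions of $X$ into topological moieties by \cref{trans}.
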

    \begin{proof}
        Let \(P\) and \(Q\) be finite partitions of \(U\) into clopen sets and \(\phi:P\to Q\) be a bijection.
        Let \(P':=P\cup \{X\backslash U\}\) and \(Q':=Q\cup \{X\backslash U\}\). Let \(\phi':=\phi \cup \{(X\backslash U,X\backslash U)\} \), i.e., $(x)\phi'=(x)\phi$ for all $x\in P$ and $(X\backslash U)\phi'=X\backslash U$.
        Note that \(\phi':P'\to Q'\) is a bijection.
As \(G\) emulates \(\Homeo(X)\), there is \(g\in G\) such that for all \(p\in P'\), we have \((p)g=(p)\phi'\).
As \(G\) acts fully on \(X\), there is \(k\in G\) acting via the homeomorphism \(h\) of \(X\) extending \(g{\restriction}_U\) and the identity map on \(X\backslash U\). It is clear that \(k\in G_U\), and $(p)k=(p)\phi$ for all $p\in P$. Hence $G_U$ emulates $\Homeo(U)$.

Let \(g\in \Homeo(U)\) and suppose that \(C\) is an open cover of \(U\) such that for all \(V\in C\) there is \(h_V\in G_U\) with \(h_V{\restriction}_V=g{\restriction}_V\).
We must show that there is \(k\in G_U\) which acts on \(U\) via the homeomorphism \(g\).
Note that \(C\cup \{X\backslash U\}\) is an open cover of \(X\). 
Let \(h_{X\backslash U}\) be the identity of \(G\) and
let \(k\in \Homeo(X)\) be the element which fixes \(X\backslash U\) pointwise and such that \(k{\restriction}_U=g\).
As \(G\) acts fully on \(X\) and \(k{\restriction}_{V}=h_V{\restriction}_{V}\) for all \(V\in C\cup \{X\backslash U\} \), it follows that there is \(k'\in G\) such that \(k'\) acts on \(X\) via the homeomorphism \(k\). 
Note that \(k'\in G_U\) acts on \(U\) via the homeomorphism \(g\), as required.
\end{proof}

Results like the one discussed in the following lemma are very common in the study of homeomorphism groups.
See for example Property E from \cite{GV}.

\begin{lemma}\label{smallsuppgen}
Let $X$ be a homeoclastic space and $G$ be a group fully emulating $\Homeo(X)$. 
Suppose that \(\{A,B,C\}\) is a partition of \(X\) into topological moieties.
Then 
$$\genset{G_{A\cup B}, G_{B\cup C}}=
G.$$  
\end{lemma}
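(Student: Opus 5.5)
The plan is to write $H=\genset{G_{A\cup B}, G_{B\cup C}}$ and show $G\subseteq H$ in two stages. First, $H$ contains $G_E$ for every clopen $E$ whose complement contains a topological moiety. Second, every $g\in G$ can be multiplied by an element of $H$ so that the product fixes a topological moiety pointwise.

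Throughout I use that $A\cup B$ and $B\cup C$ are topological moieties. Their complements $C$ and $A$ are moieties, and $A\cup B$ is homeomorphic to $X$ by condition (4) of Definition~\ref{clastic}, since it contains $A$. Hence, by \cref{marvelous-stabilisers}, $G_{A\cup B}$ fully emulates $\Homeo(A\cup B)$, and similarly for $B\cup C$. In particular, by \cref{transactonmoieties} applied inside $A\cup B$ together with \cref{trans}, any two moieties of $X$ contained in $A\cup B$ are mapped one onto the other by some element of $G_{A\cup B}$. Splitting $B$ into two moieties $B_1,B_2$ (possible since $B\cong X$), I then get an element of $G_{A\cup B}$ swapping $A$ and $B_1$, and an element of $G_{B\cup C}$ swapping $B_1$ and $C$. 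Composing such elements, $H$ contains elements permuting $A,B,C$ and their moiety pieces quite freely. The aim of this part is the claim that for every topological moiety $D$ there is $h\in H$ with $C\subseteq (D)h$. I would prove it by first moving a moiety contained in $D$ into one of $A,B,C$ (using condition (3) to see that one of $D\cap A$, $D\cap B$, $D\cap C$ is homeomorphic to $X$ and hence, by (4), contains a moiety), and then transporting that piece onto a set containing $C$ with the swaps above.

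Given the claim, stage one is immediate. If $D$ is a moiety and $E\subseteq X\setminus D$, choose $h\in H$ with $C\subseteq (D)h$. Then $h^{-1}G_E h=G_{(E)h}\subseteq G_{X\setminus C}=G_{A\cup B}\subseteq H$, so $G_E\subseteq H$.

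For stage two, take $g\in G$ that is not the identity on $X$ (if it acts trivially on $X$ it lies in every $G_U$). Pick $x$ with $(x)g\neq x$. By Hausdorffness and condition (2) there is a small clopen set, and inside it a moiety $D$, with $D\cap (D)g=\emptyset$. Using emulation, I would build $f\in G$ that agrees with $g$ on $D$ and swaps $D$ and $(D)g$ while fixing the rest, by partitioning $X$ into $D$, $(D)g$ and the complement (which contains a moiety or is handled by splitting further). By fullness, $f$ exists as an element of $G$ acting by the piecewise homeomorphism. Then $f$ is supported on $D\cup(D)g$, whose complement contains a moiety once $D$ is chosen small, e.g.\ inside a moiety that leaves room, so $f\in H$ by stage one. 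Also $gf^{-1}$ fixes $D$ pointwise, so $gf^{-1}\in G_{X\setminus D}\subseteq H$, again by stage one. Hence $g\in H$.

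The main obstacle is the transitivity claim: showing that $H$, not just $G$, can move an arbitrary moiety so that it covers $C$. This requires careful bookkeeping of partitions into moieties, so that each intermediate map has support inside $A\cup B$ or $B\cup C$. I would try first to reduce to moieties lying inside a single one of $A,B,C$ and then use the two explicit swaps.
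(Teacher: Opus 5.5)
Your stage one is correct: $h\in H$ pushing a moiety in $X\setminus E$ over $C$ conjugates $G_E$ into $G_{A\cup B}$. Stage two, however, has a genuine gap. You assert that if $g$ moves a point $x$, then some topological moiety $D$, lying inside a small clopen neighbourhood of $x$, satisfies $D\cap (D)g=\emptyset$. Condition (2) only makes small neighbourhoods finite intersections of moieties, and these need not contain any moiety; in the discrete space $\N$ they can be singletons. In fact such a $D$ need not exist anywhere. Take $X=\omega^2\cong\N\times\{0,1,\frac12,\frac13,\ldots\}$, which is homeoclastic, and $G=\Homeo(X)$. Let $g$ fix every $(n,0)$ and swap $(n,\frac1k)$ with $(n,\frac1{k+1})$ for every odd $k$.
\begin{itemize}
\item Every moiety contains infinitely many of the fixed points $(n,0)$, so $D\cap(D)g\neq\emptyset$ for every moiety $D$.
\item Every moiety also contains moved isolated points, so $g$ fixes no moiety pointwise, and stage one does not apply to $g$ directly either.
\end{itemize}
So your argument never shows that this $g$ lies in $H$.

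Disjointness is not actually needed. It suffices to find a moiety $D$ and some $f\in H$ with $f{\restriction}_D=g{\restriction}_D$, since then $gf^{-1}\in G_{X\setminus D}\subseteq H$. To get one, split a moiety $D_0$ into moieties $D_1,D_2$. By conditions (3) and (4), one of $(X\setminus D_0)\setminus(D_1)g$ and $(X\setminus D_0)\cap(D_1)g$ contains a moiety $M$. This $M$ misses $D_1\cup(D_1)g$ in the first case and $D_2\cup(D_2)g$ in the second; call the corresponding $D_i$ simply $D$. Split $M$ into moieties $E,E'$. By emulation and fullness there is $f_1\in G$ swapping $D$ and $E$ and fixing the rest. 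By fullness there is $f_2\in G$ acting as $f_1^{-1}g$ on $E$, as $g^{-1}f_1$ on $(D)g$, and trivially elsewhere. Both are supported away from $E'$, so they lie in $H$ by stage one, and $f_1f_2$ agrees with $g$ on $D$. The paper avoids this issue differently. It finds $h\in H$ with $(A\cup B)gh=A\cup B$ and then uses fullness to place $gh$ in $G_{A\cup B}G_C\subseteq H$; its Cases 1--2 are essentially your transitivity claim.

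That transitivity claim, which you flag as the main obstacle, is short, but your stated justification for it is wrong. A moiety of $X$ contained in $A\cup B$ need not be a moiety of $A\cup B$; take $(A\cup B)\setminus\{x\}$ with $X=\N$. So such moieties are not all conjugate under $G_{A\cup B}$, and \cref{trans} goes in the opposite direction. What condition (4) does give is this: a moiety $M\subseteq A$ is a moiety of $A\cup B$, since its complement there contains $B$, and a moiety $M\subseteq B$ or $M\subseteq C$ is a moiety of $B\cup C$. Then, by \cref{marvelous-stabilisers} and \cref{transactonmoieties}, you send $M\subseteq A$ to $B$ within $G_{A\cup B}$ and then to $C$ within $G_{B\cup C}$. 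A moiety $M\subseteq B$ or $M\subseteq C$ goes directly to $C$ within $G_{B\cup C}$.
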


\begin{proof}
    For convenience, denote the subgroup $\genset{G_{A\cup B}, G_{B\cup C}}$ of $G$ by $H$.
    Let $g\in G$ be arbitrary. 
    We will show that there exists $h\in H$ such that $(A\cup B)gh=A\cup B$. 
    As \(G\) acts fully on \(X\), this implies that there is an element $y$ of \(G_{A\cup B}\) which acts on \(A\cup B\) via the homeomorphism \((gh){\restriction}_{A\cup B}\) and fixes \(C\) pointwise. Similarly, there is an element $z$ of \(G_{C}\) which acts on \(C\) via the homeomorphism \((gh){\restriction}_{C}\) and fixes \(A\cup B\) pointwise.
    Thus the element \(yz\in G_{A\cup B}G_{C}\) acts on \(X\) via the same homeomorphism as \(gh\). As \(G_{A\cup B}\) contains the kernel of the action homomorphism \(\phi_G\), this implies that  $gh\in G_{A\cup B}G_C$ and so \(g\in G_{A\cup B}G_Ch^{-1}\subseteq H\) as required.
    
    We must now find $h\in H$ such that $(A\cup B)gh=A\cup B$.  Let \(\{B_1,B_2\}\) be a partition of \(B\) into topological moieties of $X$. Since $C$ is a topological moiety, $A\cup B=X\setminus C$ is also a topological moiety. Since $g$ induces a homeomorphism of $X$, the sets \((A\cup B)g\) and \((C)g\) are topological moieties of \(X\).  Note that 
    $$(C)g = ((C)g \cap (A\cup B_1)) \cup ((C)g \cap (C\cup B_2)).$$
    As \((C)g\cong X\) is homeoclastic, condition (3) of \cref{clastic} implies that one of the sets \((C)g \cap (A\cup B_1)\) or \((C)g \cap (C\cup B_2)\) is homeomorphic to \((C)g\). Condition (4) of \cref{clastic} then implies that this set contains a topological moiety of \((C)g\).
    We first show that in both cases, there is \(h\in H\) such that \((A\cup B)gh \subseteq B\).
    
    Case 1: First consider the case when \((C)g \cap (A\cup B_1)\) contains a topological moiety \(D\) of \((C)g\). 
    Note that \(D\cong  (C)g \cong X\cong A\cup B_1\) are homeoclastic. Thus, condition (4) of \cref{clastic} applied to $A\cup B_1$ and its clopen subspace $D$ implies that \(D\) contains a topological moiety \(E\) of \(A\cup B_1\). 
    Let \(F\) be a topological moiety of \(E\).
    We now have \(F\subseteq E\subseteq D\subseteq (C)g \cap (A\cup B_1)\). 
    Note that \(A\cup B_1\) is homeomorphic to \(X\), and so are \(B_2\) and \(A\cup B\).
    Thus \(A\cup B_1\) is a topological moiety of \(A\cup B\).
    We now have that \(F\) is a topological moiety of \(E\), which is a topological moiety of \(A\cup B_1\), which is a topological moiety of \(A\cup B\).
    Thus, by \cref{trans}, \(F\) is a topological moiety of \(A\cup B\), and \(E\backslash F\) is a topological moiety of $X$.
    As \(X\backslash F\) and \(X\backslash E\) are clopen and homeomorphic to \(X\), it follows that \(E\backslash F\) is a topological moiety of \(X\backslash F\).
    
    By \cref{marvelous-stabilisers}, \(G_{A\cup B}\) emulates \(\Homeo(A\cup B)\).
    So, by \cref{transactonmoieties}, there is \(h_1\in G_{A\cup B}\) with \((F)h_1=A\). This implies that \((E\backslash F)h_1\) is a topological moiety of \(B\cup C\). By applying \cref{marvelous-stabilisers} and \cref{transactonmoieties} again, we can find \(h_2\in G_{B\cup C}\) with \((E)h_1h_2=A\cup C\). Put $h=h_1h_2\in H$. Then \(((C)g)h\supseteq A\cup C\), which implies that \((A\cup B)gh \subseteq B\).

    Case 2: Symmetric with Case 1.
    
We now have \(h\in H\) with \((A\cup B)gh \subseteq B\). Using \cref{marvelous-stabilisers} and \cref{transactonmoieties}, let \(h_3\in H\) be such that $(B)h_3=B_1$. Then \((A\cup B)ghh_3 \subseteq B_1\) is a clopen set homeomorphic to \(B\) which is disjoint from  \(B_2\). As \(B\) is homeoclastic, condition (4) of \cref{clastic} implies that \(B\backslash ((A\cup B)ghh_3)\) is homeomorphic to \(B\). Since $g,h,h_3$ induce homeomorphisms and $A\cup B$ is homeomorphic to $B$, we get that \((A\cup B)ghh_3\) is homeomorphic to $B$. Thus \((A\cup B)ghh_3\) is a topological moiety of \(B\).
Since $B$ is a topological moiety of $A\cup B$, \cref{trans} implies that \((A\cup B)ghh_3\) is a topological moiety of \(A \cup B\). Since $A\cup B_1$ is a topological moiety of $A\cup B$ (recall case 1), and $A\cup B$ is a topological moiety of $X$, it follows from \cref{trans} that $A\cup B_1$ is a topological moiety of $X$.
By \cref{marvelous-stabilisers} and \cref{transactonmoieties}, there is \(h_4\in G_{A\cup B}\leq H\) with \(((A\cup B)ghh_3)h_4=A\cup B_1\). By \cref{marvelous-stabilisers} and \cref{transactonmoieties}, there is \(h_5\in G_{B\cup C}\) such that \((B_1)h_5=B\). It follows that \((A\cup B)ghh_3h_4h_5=A\cup B\) as required.  
\end{proof}

\begin{lemma}\label{lem:filtact}
    Suppose that \(G\) is a group acting on a topological space \(X\).
    Let \(\mathbb {TMU}\) denote the set of TM ultrafilters on $X$.
    There is a well-defined action of \(G\) on \(\mathbb {TMU}\) (we view $\mathbb {TMU}$ as discrete) given by
    \[(\mathcal{F})g=\makeset{(U)g}{\(U\in \mathcal{F}\)}\]
    for all \(\mathcal{F}\in \mathbb{TMU}\) and \(g\in G\). In particular, the stabiliser 
    $G_{\filter}$ of any \(\mathcal{F}\in \mathbb{TMU}\) is a subgroup of $G$.
\end{lemma}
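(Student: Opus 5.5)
The plan is to check three things: that $(\mathcal F)g$ is again a TM ultrafilter, that the assignment satisfies the action axioms, and then to note that stabilisers of an action are subgroups. Throughout we use that each $g\in G$ acts on $X$ via a homeomorphism $\hat g=(g)\phi_G$. The key observation is that a homeomorphism of $X$ maps topological moieties to topological moieties. If $U$ is clopen with $U\cong X\setminus U\cong X$, then $(U)\hat g$ is clopen. Moreover, $(U)\hat g\cong U$, and $X\setminus (U)\hat g=(X\setminus U)\hat g\cong X\setminus U$. Since $\hat g^{-1}$ is also a homeomorphism, $\hat g$ induces a bijection on the set of topological moieties of $X$ that commutes with complementation.

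\textbf{Step 1: $(\mathcal F)g$ is a TM ultrafilter.} Since $\hat g$ is a bijection of $X$, the map $S\mapsto (S)\hat g$ is a bijection of the power set of $X$. It preserves inclusions, finite intersections and complements, and fixes $\varnothing$. Hence the image of the filter $\mathcal F$ is a filter. If $\mathcal B\subseteq\mathcal F$ is a subbase of topological moieties, then $\{(B)g: B\in\mathcal B\}$ is a subbase of $(\mathcal F)g$ consisting of topological moieties, by the key observation. So $(\mathcal F)g$ is a TM filter.

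For the ultrafilter condition, let $A$ be any topological moiety. Then $(A)\hat g^{-1}$ is a topological moiety, so either it or its complement $(X\setminus A)\hat g^{-1}$ lies in $\mathcal F$. Applying $g$, either $A$ or $X\setminus A$ lies in $(\mathcal F)g$.

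\textbf{Step 2: the action axioms.} The identity of $G$ acts via $\mathrm{id}_X$, so $(\mathcal F)1=\mathcal F$. Since $\phi_G$ is a homomorphism and the paper composes maps on the right, $(U)(gh)=((U)g)h$ for every $U\subseteq X$. Therefore $(\mathcal F)(gh)=((\mathcal F)g)h$.

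\textbf{Step 3: stabilisers.} The set $G_{\mathcal F}=\{g\in G:(\mathcal F)g=\mathcal F\}$ is the stabiliser of a point under a group action on a set, and hence is a subgroup of $G$.

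The only step with any content is Step 1. Within it, the point needing care is that moiety-ness and clopenness are preserved in both directions, so that the ultrafilter dichotomy transfers. Using $\hat g^{-1}$ as above handles this, so I do not expect a genuine obstacle.
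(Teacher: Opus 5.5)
Your proof is correct and follows the same outline as the paper: homeomorphisms carry topological moieties to topological moieties, so they carry TM filters to TM filters. The only difference is the ultrafilter step. The paper notes that each $g$ induces an inclusion-preserving permutation of the TM filters and then uses \cref{lem:maximalultrafilters} (TM ultrafilters are exactly the maximal TM filters) to conclude that maximality is preserved. You instead check the defining dichotomy directly by pulling a moiety $A$ back along $\hat g^{-1}$ and using that $\hat g^{-1}$ commutes with complementation. Your route is self-contained and does not need the maximality lemma. You also spell out the action axioms and the stabiliser claim, which the paper leaves implicit.
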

\begin{proof}
 As each \(h\in G\) induces a homeomorphism of \(X\), it follows that the image of a topological moiety is a topological moiety.
 Thus, for each TM filter \(\mathcal{F}\), the set \((\mathcal{F})h=\makeset{(U)h}{\(U\in \mathcal{F}\)}\) is a TM filter.
 Thus each element of \(G\) induces a permutation of the set of TM filters. It is routine to check, using \cref{lem:maximalultrafilters}, that the image $(\F)h$ of a TM ultrafilter $\F$ is also a TM ultrafilter.
\end{proof}

\begin{definition}
Let $G$ be a group acting on a topological space \(X\) and $\F$ be a TM ultrafilter. By $G_\F$ we denote the stabiliser of \(\filter\), using the action from \cref{lem:filtact}.   
\end{definition}

\begin{theorem}\label{theorem-stabilisers-clopen-maximal}
    Let $X$ be a homeoclastic space and $\filter$ be a TM ultrafilter on $X$. If \(G\) is a group acting on \(X\) which fully emulates \(\Homeo(X)\), 
    then  $G_{\filter}$ is a maximal subgroup of $G$. 
    Moreover, distinct TM ultrafilters give rise to distinct stabilisers. 
\end{theorem}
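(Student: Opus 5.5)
The plan is to show three things: $G_{\filter}$ is proper; $\genset{G_{\filter},g}=G$ for every $g\in G\setminus G_{\filter}$, via \cref{smallsuppgen}; and distinct TM ultrafilters have distinct stabilisers. Everything rests on one basic observation.

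\emph{Observation.} If $V$ is clopen and $X\setminus V\in\filter$, then $G_V\le G_{\filter}$. To see this, let $k\in G_V$. For every topological moiety $W\in\filter$, $k$ fixes $W\cap(X\setminus V)$ pointwise, so $(W)k\supseteq W\cap (X\setminus V)\in\filter$, hence $(W)k\in\filter$. Since $\filter$ has a subbase of moieties, it follows that $(\filter)k\supseteq\filter$. Both are TM ultrafilters by \cref{lem:filtact}, so \cref{lem:maximalultrafilters} gives $(\filter)k=\filter$.

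\emph{Properness.} $\filter$ contains some topological moiety $A$. Indeed, $X$ has a partition into moieties, and the ultrafilter property picks one of a complementary pair. Apply emulation to $\mathcal P=\mathcal Q=\{A,X\setminus A\}$ with $\phi$ the swap. This gives $g$ with $(A)g=X\setminus A$, so $X\setminus A\in(\filter)g$ and therefore $(\filter)g\neq\filter$.

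\emph{Maximality.} Let $g\notin G_{\filter}$ and put $H=\genset{G_{\filter},g}$. Then $H$ also contains $g^{-1}G_{\filter}g=G_{(\filter)g}$. Since $\filter\neq(\filter)g$ are distinct TM ultrafilters, there is a topological moiety $U$ with $U\in\filter$ and $X\setminus U\in(\filter)g$.
\begin{enumerate}
\item Split $U$ into two topological moieties of $X$. One of them, call it $C$, lies in $\filter$: if the first piece is not in $\filter$, its complement is, and intersecting with $U$ puts the second piece in $\filter$.
\item Similarly, split $X\setminus U$ and choose a piece $A\in(\filter)g$.
\item Let $B=X\setminus(A\cup C)$. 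This is a disjoint union of two clopen copies of $X$. Since $X$ splits into two moieties, $X\sqcup X\cong X$, so $B\cong X$; likewise $A\cup C\cong X$. Hence $B$ is a topological moiety, and $\{A,B,C\}$ is a partition of $X$ into topological moieties.
\end{enumerate}
Applying the Observation to $\filter$ and to $(\filter)g$ gives $G_{A\cup B}\le G_{\filter}\le H$ (as $C\in\filter$) and $G_{B\cup C}\le G_{(\filter)g}\le H$ (as $A\in(\filter)g$). By \cref{smallsuppgen}, $H=G$.

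\emph{Distinctness.} Let $\filter\neq\filter'$ and take a moiety $U\in\filter$ with $X\setminus U\in\filter'$. Split $X\setminus U$ into moieties $V_1,V_2$, with $V_1\in\filter'$ say.
\begin{enumerate}
\item By emulation, with $\mathcal P=\mathcal Q=\{U,V_1,V_2\}$ and $\phi$ fixing $U$ and swapping $V_1,V_2$, there is $g\in G$ with $(U)g=U$, $(V_1)g=V_2$ and $(V_2)g=V_1$.
\item By fullness, using the open cover $\{U,V_1,V_2\}$, there is $k\in G$ acting as the identity on $U$ and as $g$ on $X\setminus U$. So $k\in G_{X\setminus U}\le G_{\filter}$ by the Observation.
\item But $(V_1)k=V_2$, so $V_2\in(\filter')k$. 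Since $V_1\in\filter'$ and $V_1\cap V_2=\emptyset$, we have $V_2\notin\filter'$, so $(\filter')k\neq\filter'$ and $k\notin G_{\filter'}$.
\end{enumerate}
Hence $G_{\filter}\neq G_{\filter'}$.

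The main obstacle is the Observation, specifically passing from $(\filter)k\supseteq\filter$ to equality. This needs both that images of TM ultrafilters are TM ultrafilters (\cref{lem:filtact}) and the maximality characterisation in \cref{lem:maximalultrafilters}. The remaining steps are bookkeeping with moieties and the homeomorphism $X\sqcup X\cong X$.
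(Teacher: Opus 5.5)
Your proposal is correct and follows essentially the paper's route. Properness comes from swapping a moiety in $\filter$ with its complement. Maximality comes from noting that $\genset{G_{\filter},g}$ contains both $G_{\filter}$ and its conjugate $G_{(\filter)g}$, then applying \cref{smallsuppgen} to a three-piece moiety partition; splitting both $U$ and $X\setminus U$ is unnecessary, since splitting one side suffices as in the paper. Distinctness comes from a swap supported inside a moiety complementary to one of the ultrafilters, which mirrors the paper's argument but is done more carefully via emulation plus fullness.

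One harmless slip: in your Observation, $(W)k\in\filter$ for all moieties $W\in\filter$ gives $(\filter)k\subseteq\filter$ rather than $\supseteq$. \cref{lem:maximalultrafilters} yields equality either way.
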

\begin{proof}
 Fix a TM ultrafilter $\filter$ on $X$. By \cref{lem:filtact}, $G_\F$ is a subgroup of $G$.
First note that \(G_\filter\) does not emulate \(\Homeo(X)\) as if \(U\in \filter\) is a topological moiety then there is no \(g\in G_\filter\) with \((U)g=X\backslash U\). Hence \(G_\filter \neq G\). 

Since the set of topological moieties from $\F$ forms a subbase of $\F$ and $h$ preserves topological moieties of $X$, we obtain that \[(\makeset{U\in \mathcal{F}}{\(U\) is a topological moiety})h\] is a subbase for the TM ultrafilter \((\mathcal{F})h\). 
 \cref{lem:maximalultrafilters} implies the following:
\begin{equation}\label{moiety-filter-equivalence}
h\in G_{\filter} \iff (\makeset{U\in \mathcal{F}}{\(U\) is a topological moiety})h \subseteq \mathcal{F}.   
\end{equation}

To show that $G_{\filter}$ is maximal, let $h\in G\setminus G_{\filter}$ be arbitrary.
Then Equation~\ref{moiety-filter-equivalence} above implies that there exists a topological moiety $U\in \filter$ such that $(U)h\not \in \filter$. Note that \(G_{X\backslash U}\leq G_\filter\), thus \[G_{X\backslash (U)h}=h^{-1}G_{X\backslash U}h \leq 
 \langle G_\filter ,h\rangle.\] 
 As \(\filter\) is a TM ultrafilter and $(U)h\not \in \filter$, we have \(X\backslash (U)h\in\F\). Note that \(X\backslash (U)h\) is a topological moiety. Since $X$ is homeoclastic and \(X\backslash (U)h\cong X\), we can partition the subspace \(X\backslash (U)h\) of \(X\) into two topological moieties \(A\) and \(B\).
 We now have that \(\{A,B,(U)h\}\) is a partition of \(X\) into topological moieties, \(A\cup B \in \F\), and \(G_{A\cup B}=G_{X\backslash (U)h}\leq 
 \langle G_\filter ,h\rangle.\) As \(\F\) is a TM ultrafilter, it follows that exactly one of \(A\) and \(B\) belongs to \(\F\). Assume without loss of generality that \(A\in \F\). Thus \(G_{B\cup (U)h} \leq G_{\F}\).
 
We now have
\[\langle G_{A\cup B},G_{B\cup (U)h} \rangle \leq 
 \langle G_\filter ,h\rangle.\]
 \cref{smallsuppgen} then implies that 
 \[G= \langle G_{\F},h \rangle.\]
 This completes the proof that the group \(G_{\F}\) is maximal. Suppose that $\F$ and $\F'$ are distinct TM ultrafilters.
It remains to check that $G_\F\neq G_{\F'}$. Since $\mathcal{F}\neq \mathcal{F}'$, there exists a topological moiety \(U \in \mathcal{F}\backslash \mathcal{F}'\).
Since $\mathcal{F}'$ is a TM ultrafilter, $X \setminus U \in  \mathcal{F}'$ and so $G_{U}\leq G_{\mathcal{F}'}$. Let $\{A, B\}$ be a partition of $U$ into topological moieties and assume without loss of generality that $A \in \mathcal{F}$. Since $G$ fully emulates $\Homeo(X)$, \cref{transactonmoieties} implies that there exists $g \in G_{U}\leq G_{\mathcal{F}'}$ such that $(A)g=B$ and $(B)g=A$. Then $g \in G_{\mathcal{F}'}\setminus G_{\mathcal{F}}$. In particular, $G_{\mathcal{F}'}\neq G_{\mathcal{F}}$, as required. 
\end{proof}

We may now prove the main result of this paper, which we restate for convenience. 

\bigmaxtheorem*
\begin{proof}
    By Lemma~\ref{2chomeoclast}, there are at least $2^{ 2^{\kappa}}$ distinct TM ultrafilters on $X$. Thus the first part of the result follows directly from Theorem \ref{theorem-stabilisers-clopen-maximal}. If $X$ has a basis of size $\kappa$, then \(|X|\leq 2^{\kappa}\). Moreover, \(X\) has a dense subset \(D\) of size \(\kappa\), so if $\phi_G$ denotes the homomorphism defining the action of $G$, then \[|G|=|\ker(\phi_G)|\cdot|\im(\phi_G)|\leq |\ker(\phi_G)|\cdot |X^D|\leq  2^{\kappa}\cdot (2^\kappa)^{\kappa}=2^{\kappa}.\] Thus \(G\) contains at most \(2^{2^{\kappa}}\) subsets. In particular, $G$ has at most $2^{2^{\kappa}}$ many (maximal) subgroups.
\end{proof}


\section{Maximal subgroups via finite partitions}\label{section:partitions}

Recall that the third item of \cref{maximalsubgroup breakdown} follows from \cref{cor:manyexamples}.
In this section, we prove the remaining two items of \cref{maximalsubgroup breakdown} as well as \cref{maximal-clopen-open}.

\begin{theorem}\label{closedstab}
    Let $n\geq 2$ and let $P=\{\Sigma_0, \dots, \Sigma_{n-1}\}$ be a partition of the rational numbers $\Q$ into clopen sets. Then the stabiliser 
    $$\Stab(P)=\makeset{f\in \Homeo(\Q)}{\((\Sigma)f\in P \text{ for all }\Sigma \in P\)}$$
    of $P$ is a closed maximal subgroup of $\Homeo(\Q)$ with respect to the pointwise topology.
\end{theorem}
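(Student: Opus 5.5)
Three things have to be established: $\Stab(P)$ is closed in the pointwise topology, it is proper, and it is maximal. Throughout we use that each $\Sigma_i$ is a nonempty proper clopen subset of $\Q$, hence homeomorphic to $\Q$ (Example~\ref{example}(ii)).

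\emph{Closedness.} Suppose $f\notin\Stab(P)$. Since $f$ is a bijection and $P$ is a finite partition, this means some $\Sigma_i$ has image meeting two distinct blocks. So there are $x,y\in\Sigma_i$ with $(x)f\in\Sigma_j$, $(y)f\in\Sigma_k$ and $j\neq k$. The basic open set $U_{x,(x)f}\cap U_{y,(y)f}$ contains $f$ and misses $\Stab(P)$. Hence the complement of $\Stab(P)$ is open.

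\emph{Properness.} Take a homeomorphism mapping $\Sigma_0$ onto a proper nonempty clopen subset of $\Sigma_0$, extended by a homeomorphism from the remaining sets onto their complement. Such extensions exist because all nonempty clopen sets are homeomorphic to $\Q$. This element does not preserve $P$.

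\emph{Maximality.} Let $h\notin\Stab(P)$ and $H=\langle \Stab(P),h\rangle$; we show $H=\Homeo(\Q)$. Since $\Stab(P)$ contains every homeomorphism fixing each $\Sigma_i$ setwise and also realises all permutations of the blocks, it acts transitively on ordered partitions of $\Q$ into $n$ nonempty clopen sets that are refined compatibly. The strategy is to show that for an arbitrary $g$ we can find $s\in H$ such that $gs$ maps each $\Sigma_i$ onto some $\Sigma_j$; then $gs\in\Stab(P)$ and so $g\in H$.
\begin{enumerate}
\item From $h$, produce an element $k\in H$ with a specific behaviour. We want $(\Sigma_0)k\subsetneq\Sigma_0$, or more usefully, $k$ moving a clopen piece $C\subseteq\Sigma_1$ into $\Sigma_0$ while fixing everything else blockwise. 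Since $(\Sigma_i)h$ meets $\Sigma_j$ and $\Sigma_k$ for some $j\neq k$, we can compose $h$ on both sides with elements of $\Stab(P)$, normalising clopen pieces by homeomorphisms inside blocks. This should yield $k\in H$ with $(\Sigma_0)k=\Sigma_0\cup C$ and $(\Sigma_1)k=\Sigma_1\setminus C$, the other blocks being fixed, where $C$ and $\Sigma_1\setminus C$ are nonempty clopen.
\item Conjugating $k$ by elements of $\Stab(P)$ then gives, for any ordered pair $i\neq j$ and any nonempty proper clopen $C\subseteq\Sigma_j$, an element of $H$ transferring exactly $C$ from $\Sigma_j$ to $\Sigma_i$. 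Inverses give the reverse transfers.
\item Given arbitrary $g$, look at the target partition $\{(\Sigma_i)g\}$. Its common refinement with $P$ is a finite clopen partition. Use the transfer moves from step 2, one piece at a time, to carry $\{(\Sigma_i)g\}$ back to $P$, keeping every block nonempty at each stage. Whenever a block would be emptied, first move a spare clopen piece in.
\end{enumerate}

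The main obstacle is step 1: extracting a clean "transfer" element from an arbitrary $h\notin\Stab(P)$. The plan is to use that $\Stab(P)$ contains every homeomorphism preserving each block. Choose $x\in\Sigma_i$ and clopen $D\ni x$ inside $\Sigma_i\cap(\Sigma_j)h^{-1}$, and compare $h$ with an element $p\in\Stab(P)$ agreeing with $h$ on $\Sigma_i\setminus D$ after a permutation of blocks. Then $hp^{-1}$ or a similar commutator-style product should realise the transfer. Some care is needed if $h$ splits blocks in several ways. If this gets messy, the fallback is to argue via the partition lattice: show that $H$ acts transitively on $n$-block clopen partitions and contains $\Stab(P)$, which forces $H=\Homeo(\Q)$.
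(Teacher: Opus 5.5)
\textbf{There is a genuine gap: Step~1 of maximality is the whole content of the theorem, and the mechanism you sketch for it cannot work.}

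Your closedness and properness arguments are fine, and your closedness argument is more explicit than the paper's ``straightforward''. The trouble is in how you try to extract a transfer element from $h$.

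\emph{Why two-sided normalisation fails.} Every element $s_1hs_2$ with $s_1,s_2\in\Stab(P)$, in particular $hp^{-1}$, has the same pattern of nonempty intersections $\Sigma_i\cap(\Sigma_j)g$ as $h$, up to permuting rows and columns. This is because $\Sigma_i\cap(\Sigma_j)s_1hs_2=\bigl((\Sigma_i)s_2^{-1}\cap((\Sigma_j)s_1)h\bigr)s_2$ and $s_1,s_2$ permute the blocks. For example, take $n=2$ and suppose all four sets $\Sigma_i\cap(\Sigma_j)h$ are nonempty. Then no element of $\Stab(P)h\Stab(P)$ is a transfer, because a transfer $k$ of $C$ into $\Sigma_0$ has $\Sigma_0\cap(\Sigma_1)k=\emptyset$. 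So no normalisation of $h$ on both sides by elements of $\Stab(P)$ can produce your $k$; you have to use $h$ at least twice.

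\emph{Why the fallback is circular.} We have $\Stab(P)\le H$, and $\Homeo(\Q)$ is transitive on partitions of $\Q$ into $n$ nonempty clopen sets. Hence transitivity of $H$ on such partitions is equivalent to $H=\Homeo(\Q)$. It restates the goal rather than proving it.

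\emph{The missing idea: conjugate supports.} Pick $\Sigma\in P$ with $(\Sigma)h$ meeting two blocks, say $\Sigma_0$ and $\Sigma_1$; your closedness argument already gives this. Let $Y=(\Sigma)h\cap(\Sigma_0\cup\Sigma_1)$, a clopen set meeting both blocks. If $g$ is supported in $Y$, then $hgh^{-1}$ is supported in $(Y)h^{-1}\subseteq\Sigma$. So $hgh^{-1}\in\Stab(P)$, and $g=h^{-1}(hgh^{-1})h\in H$. Thus $\Homeo(\Q)_Y\le H$.

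Now choose a nonempty proper clopen $C\subseteq Y\cap\Sigma_1$. Take a homeomorphism of $Y$ sending $Y\cap\Sigma_0$ onto $(Y\cap\Sigma_0)\cup C$ and $Y\cap\Sigma_1$ onto $(Y\cap\Sigma_1)\setminus C$, and extend it by the identity. This is exactly your transfer $k$, and it lies in $H$.

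With that, your Steps~2--3 can be completed. In Step~3 you need transfers relative to intermediate partitions $(P)s$ with $s\in H$; you get these by conjugating the transfers relative to $P$ by $s$.

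\emph{How the paper finishes instead.} The paper uses the same observation $\Homeo(\Q)_Y\le H$ but avoids the combinatorics of Step~3. Lemma~\ref{smallsuppgen}, together with $\Homeo(\Q)_{\Sigma_0},\Homeo(\Q)_{\Sigma_1}\le\Stab(P)$, gives $\Homeo(\Q)_{\Sigma_0\cup\Sigma_1}\le H$. Conjugating by block permutations in $\Stab(P)$ gives every $\Homeo(\Q)_{\Sigma_i\cup\Sigma_j}$. Further applications of Lemma~\ref{smallsuppgen} then give all of $\Homeo(\Q)$.
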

\begin{proof}
It is straightforward to check that $\Stab(P)$ is a closed subgroup of $\Homeo(\Q)$. 
    Let $h \in \Homeo(\Q)\setminus \Stab(P)$ be arbitrary. There exists $\Sigma \in P$ such that $(\Sigma)h \not \in P$. So, either $(\Sigma)h$ has non-empty intersection with at least two elements of $P$, or $(\Sigma)h$ is a proper subset of some element of $P$. In the latter case, since $P$ is finite, there must exist some $\Sigma' \in P$ such that $(\Sigma')h$ has non-empty intersection with at least two distinct elements of $P$. So assume without loss of generality that $(\Sigma)h\cap \Sigma_0$ and $(\Sigma)h\cap \Sigma_1$ are non-empty.

    Let $Y=(\Sigma)h\cap (\Sigma_0 \cup \Sigma_1)$. Recalling the notation from \cref{def:supdef}, note that if $g \in \Homeo(\Q)_Y$, then $t=hgh^{-1}\in \Homeo(\Q)_\Sigma\leq \Stab(P)$.
    Hence $g=h^{-1}th\in \genset{\Stab(P), h}$ and so $\Homeo(\Q)_Y\leq \genset{\Stab(P), h}$. It follows from \cref{marvelous-stabilisers} and \cref{smallsuppgen} applied three times that 
    \[\Homeo(\Q)_{\Sigma_0\cup \Sigma_1}=\langle \Homeo(\Q)_{\Sigma_0\cup Y}, \Homeo(\Q)_{\Sigma_1\cup Y}\rangle \leq \genset{\Stab(P), h}.\] 
    By conjugating with elements of $\Stab(P)$, we conclude that $\Homeo(\Q)_{\Sigma_i\cup \Sigma_j}\leq \genset{\Stab(P), h}$ for all $i,j\in \{0,1, \dots, n-1\}$. By now repeatedly applying \cref{smallsuppgen} using the sets \(\bigcup_{i<j}\Sigma_i\) and \(\Sigma_{j-1}\cup \Sigma_j\) for each \(j\in \{1,2,\ldots,n-1\}\) we conclude that $\genset{\Stab(P), h}=\Homeo(\Q)_{\bigcup_{i=0}^n \Sigma_i}=\Homeo(\Q)$, as required. 
\end{proof}

\begin{definition}
    We say that a subgroup \(G\leq \Sym(X)\) is \emph{\(n\)-transitive} if for all $n$-tuples \((x_0,\ldots, x_{n-1})\) and \((y_0,\ldots, y_{n-1})\in X^n\) such that $x_i\neq x_j$ and $y_i\neq y_j$ for all $i\neq j$, there is \(g\in G\) with \(((x_0)g,\ldots, (x_{n-1})g)=(y_0,\ldots, y_{n-1})\). Furthermore, $G$ is \emph{highly transitive}, if $G$ is $n$-transitive for every $n \in \N$.
\end{definition}

\begin{theorem}\label{maxpoint}
    If \(G\leq \Sym(X)\) is \(2\)-transitive, then for all \(x\in X\) the group \(G_x:=\makeset{g\in G}{\((x)g=x\)}\) is a maximal subgroup of \(G\).
\end{theorem}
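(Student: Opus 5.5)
The plan is the classical argument that a transitive group is primitive exactly when point stabilisers are maximal, specialised to the $2$-transitive case. Fix $x\in X$ and let $H$ be a subgroup with $G_x\leq H\leq G$. We show that either $H=G_x$ or $H=G$.

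First I will check properness, so that maximality makes sense. If $|X|\geq 2$, then $2$-transitivity applied to $(x,y)\mapsto(y,x)$ for some $y\neq x$ gives an element of $G$ moving $x$. Hence $G_x\neq G$. (If $|X|=1$ the statement is degenerate and I would note it aside.)

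Now suppose $H\neq G_x$, and pick $h\in H\setminus G_x$, so that $y:=(x)h\neq x$. Let $g\in G$ be arbitrary; the aim is to show $g\in H$.
\begin{itemize}
\item If $(x)g=x$, then $g\in G_x\leq H$ and we are done.
\item Otherwise $z:=(x)g\neq x$. We use $2$-transitivity on the pairs $(x,y)$ and $(x,z)$, which both have distinct entries. This gives $k\in G$ with $(x)k=x$ and $(y)k=z$, so $k\in G_x\leq H$.
\end{itemize}
In the second case $(x)hk=z=(x)g$. Hence $g(hk)^{-1}$ fixes $x$, so $g(hk)^{-1}\in G_x\leq H$. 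Therefore $g\in H\,hk\subseteq H$, and $H=G$.

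The only step requiring care is the second case, where we must find an element of $G_x$ that sends $y$ to the prescribed point $z$. This is exactly where $2$-transitivity, rather than mere transitivity, is needed: it amounts to transitivity of $G_x$ on $X\setminus\{x\}$. Everything else is routine coset bookkeeping.
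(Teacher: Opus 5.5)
Your proof is correct and is essentially the paper's argument. Both use $2$-transitivity to find an element of $G_x$ sending $(x)h$ to $(x)g$, and then conclude that $g\in G_x hk\subseteq\langle G_x,h\rangle$. Your extra check that $G_x\neq G$ (which requires $|X|\geq 2$) fills in a small point the paper leaves implicit.
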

\begin{proof}
    Let \(h\in G\backslash G_x\) and let \(H:=\langle h, G_x \rangle\).
    Let \(g\in G\backslash G_x\) be arbitrary, we show that \(g\in H\).
    Let \(a\in G\) be such that \(((x)a, (x)ha)=(x, (x)g)\).
    It follows that \(a\in G_x\) and \((x)g=(x)ha\). Thus \(ga^{-1}h^{-1}\in G_x\) and \(g\in G_xha\subseteq H\).
\end{proof}

\begin{lemma}\label{maxopen}
   Suppose that \(G\leq \Sym(X)\) is highly transitive and has the subspace topology from \(\Sym(X)\). Then \(G\) has exactly \(|X|\) open maximal subgroups.
\end{lemma}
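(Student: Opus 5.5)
The plan is to show that the open maximal subgroups of \(G\) are exactly the point stabilisers \(G_x\) for \(x\in X\), and that these are pairwise distinct.

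First I check the easy direction. Each \(G_x\) is open, since it is the intersection of \(G\) with the basic open set \(U_{x,x}\) of \(\Sym(X)\). Since \(G\) is highly transitive, it is in particular \(2\)-transitive, so \cref{maxpoint} shows \(G_x\) is maximal. Distinctness uses the standing assumption that \(X\) has at least three points; otherwise \(\Sym(X)\) is finite and everything is trivial, but the case of \(X\) finite and small can be checked by hand if needed. For \(x\neq y\), pick \(z\notin\{x,y\}\). By \(3\)-transitivity there is \(g\in G\) with \((x,y,z)g=(x,z,y)\). Then \(g\in G_x\setminus G_y\). This gives at least \(|X|\) open maximal subgroups.

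Conversely, let \(M\) be an open maximal subgroup of \(G\). Since \(M\) is open and contains the identity, it contains a basic neighbourhood of the identity. That neighbourhood is the pointwise stabiliser \(G_{(F)}\) of some finite set \(F=\{x_0,\dots,x_{n-1}\}\subseteq X\). I then aim to show that \(M\) is contained in \(G_x\) for some \(x\in F\); maximality of \(M\), together with \(G_x\neq G\), then forces \(M=G_x\).

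The key step, and the main obstacle, is this containment. Suppose for contradiction that for every \(x\in F\) there is \(m_x\in M\) with \((x)m_x\neq x\). I would show that \(M\) acts on \(X\) with no finite orbits inside \(F\), and then use high transitivity to conclude \(M=G\). The cleanest route is the following.

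\begin{enumerate}[(a)]
\item Since \(M\supseteq G_{(F)}\), high transitivity of \(G\) shows that \(G_{(F)}\) is transitive on \(X\setminus F\) (if \(X\) is infinite). So the \(M\)-orbits are unions of \(X\setminus F\) and some points of \(F\).
\item Every \(x\in F\) is moved by \(M\), so I want to show that each \(x\) lies in the orbit of \(X\setminus F\). If some \(M\)-orbit \(O\) were contained in \(F\), then \(M\) would stabilise a finite set \(O\) with \(|O|\ge 2\). It would then still contain \(G_{(F)}\) and be contained in the setwise stabiliser \(G_{\{O\}}\). That stabiliser is a proper subgroup (since \(X\setminus O\neq\emptyset\)) and it properly contains the stabiliser of any point of \(O\). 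Note, however, that \(G_{\{O\}}\) contains no \(G_x\), so the argument needs care: \(M\le G_{\{O\}}<G\), and by maximality \(M=G_{\{O\}}\). Yet \(G_{\{O\}}<G_{\{O\}}\cup\ldots\) is not obviously non-maximal. Instead I would pick \(a\in O\) and \(b\notin O\) and show that \(\langle G_{\{O\}},G_a\rangle\) is a proper subgroup strictly between \(G_{\{O\}}\) and \(G\). Concretely, the stabiliser of the finite set \(O\setminus\{a\}\) together with \(G_{\{O\}}\)… Here I expect to need that \(G_{\{O\}}\) is not maximal, via the intermediate subgroup \(G_{\{O\}}\subsetneq\)?
\end{enumerate}

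On reflection, setwise stabilisers of finite sets are often maximal, so I would reorganise the argument. With the trivial exception \(|X|\) finite, a setwise stabiliser \(G_{\{O\}}\) with \(|O|\ge 2\) is open as well. The count must therefore rely on the claim that the open maximal subgroups are exactly the point stabilisers. I would verify this by showing that \(\langle G_{\{O\}}, g\rangle\), for \(g\) moving \(O\) off itself, still fixes the \emph{family} of \(|O|\)-subsets only up to\ldots{} This step is where I expect difficulty. The fallback is a counting argument: every open subgroup contains some \(G_{(F)}\) with \(F\) finite, so there are at most \(|[X]^{<\omega}|\cdot 2^{|F|!}\)-many open subgroups containing a given \(G_{(F)}\), since such subgroups correspond to subgroups of the finite group induced on \(F\) together with the stabilisers of subsets. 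This gives at most \(|X|\) open subgroups in total for infinite \(X\), and combined with the lower bound this yields exactly \(|X|\).
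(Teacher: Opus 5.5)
Your lower bound is fine and matches the paper. The upper bound is where the proposal fails. Your main target, that every open maximal subgroup is a point stabiliser, is false. For $G=\Sym(X)$ with $X$ infinite, the setwise stabiliser of a two-element set $\{a,b\}$ contains the pointwise stabiliser $G_{(\{a,b\})}$, so it is open. It is maximal (a standard fact, checked via its orbits on $2$-sets). Yet it fixes no point.

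You sense this in (b), but then still say the count must rely on that claim; it does not. The paper only proves that an open maximal $H$ has a \emph{finite orbit}. It takes a minimal finite $F$ with $G_{(F)}\le H$. If all $H$-orbits were infinite, conjugating inside $H$ would give $G_{(F\setminus\{x\})}\le H$ for some $x\in F$, contradicting minimality. Hence $H$ lies in, and by maximality equals, the setwise stabiliser of a finite set, and there are only $|X|$ finite sets.

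Your fallback, bounding the number of \emph{all} open subgroups, is a legitimate different route, but its key step is unjustified as written. Subgroups containing $K=G_{(F)}$ do not correspond to subgroups of the group induced on $F$: $G$ itself and $G_x$ for $x\in F$ contain $K$ but do not preserve $F$. The naive count, with $H\ge K$ a union of the $|X|$ cosets $Kg$, only gives $2^{|X|}$.

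The missing idea is that there are only finitely many $K$-double cosets. Enumerate $F$ as $\bar x=(x_0,\dots,x_{n-1})$.
\begin{enumerate}[(i)]
\item A subgroup $H\ge K$ is determined by the $K$-invariant set $(\bar x)H$ of injective $n$-tuples. Indeed, if $(\bar x)g=(\bar x)h$ with $h\in H$, then $gh^{-1}\in K\le H$, so $g\in H$.
\item Two injective $n$-tuples with the same pattern relative to $F$ (which coordinates equal which $x_j$, and which lie outside $F$) are in one $K$-orbit. To see this, apply high transitivity to $\bar x$ concatenated with the coordinates lying outside $F$.
\item There are at most $(n+1)^n$ patterns, so only finitely many subgroups contain $K$. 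Hence $G$ has at most $|X|$ open subgroups in total.
\end{enumerate}

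Repaired this way, your route is shorter than the paper's and proves more: $G$ has exactly $|X|$ open subgroups, maximal or not. The paper's argument instead describes the open maximal subgroups as finite setwise stabilisers.

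Finally, simply assume $X$ is infinite, as the paper tacitly does and as $|[X]^{<\omega}|=|X|$ requires. The statement is false for finite $X$: the symmetric group on three points has four maximal subgroups. So that case cannot be checked by hand.
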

\begin{proof}
      Since $G$ is highly transitive, stabilisers of distinct points are distinct. Thus, \cref{maxpoint} implies that \(G\) has at least \(|X|\) many open maximal subgroups.
    
    Let \(H\) be a open maximal subgroup of \(G\). 
If \(H\) has any finite orbits, it follows that \(H\) is contained in the setwise stabiliser of any of these finite orbits and is thus equal to a setwise stabiliser of a finite set. 
As there are only \(|X|\) finite subsets of \(|X|\) there are only \(|X|\) such setwise stabilisers.
We now complete the proof by showing that every open maximal subgroup of $G$ has a finite orbit. 

Aiming for a contradiction, assume that every orbit of \(H\) is infinite.
As \(1\in H\), there is a finite subset \(F\subseteq X\) such that \(H\) contains the pointwise stabiliser of \(F\) in $G$. Let \(F\) be a minimal (with respect to containment) such set. 
Since $G$ acts highly transitively on $X$, it follows that \(H\) acts highly transitively on \(X\backslash F\).
Let \(x\in F\) be fixed.
As each orbit of \(H\) is infinite, there is an element $h\in H$ with \((x)h\notin F\). Since \(H\) contains the pointwise stabiliser of \(F\), there is also \(k\in H\) which fixes all other elements of \(F\cup (F)h\) and satisfies \(((x)h)k\neq (x)h\). Thus the element \(hkh^{-1}\in H\) fixes all elements of \(F\backslash \{x\}\) and \((x)hkh^{-1}\notin F\).
Thus for all \(y\in X\backslash F\), there is \(f\in H\) pointwise stabilizing \(F\backslash\{x\}\) with \((y)f=x\) and \((x)f=y\).
If \(g\in G\) stabilizes \(F\backslash \{x\}\) pointwise, then there is \(l\in H\) such that \(gl\) lies in the pointwise stabiliser of \(F\). Thus \(g\in H\). So \(H\) contains the pointwise stabiliser of \(F\backslash\{x\}\). This contradicts the choice of \(F\).  
\end{proof}

We may now prove Theorems \ref{maximalsubgroup breakdown} and  \ref{maximal-clopen-open}.
\BBA*
\begin{proof}
    \((1):\) Follows from \cref{maxopen}.
    
    \((2):\) By \cref{closedstab} each finite partition stabiliser is a closed maximal subgroup. It is straightforward to verify that stabilisers of distinct finite clopen partitions are distinct. As there are continuum many clopen subsets of \(\Q\), it follows that there are at least continuum many closed maximal subgroups of \(\Homeo(\Q)\). 
  On the other hand \(\Homeo(\Q)\) is a second-countable group, so it has at most continuum many closed subsets as required.

    \((3):\) Follows from \cref{bigmaxtheorem}.
\end{proof}

\BBB*

\begin{proof}
    Let $\tau$ be a topology on $\Homeo(\Q)$ satisfying the conditions of the theorem. As discussed in the introduction, the results of \cite{Concilio} imply that for all clopen subsets $U$ and $V$ of $\Q$, the set 
    \[\makeset{h\in \operatorname{Homeo}(\Q)}{\((U)h=V\)}\]
    is  open in $\tau$.

    $(1) :$ By \cref{closedstab}, every stabiliser $S$ of a finite partition of $\Q$ into clopen sets is maximal in $\Homeo(\Q)$ and, by the above, $S$ is open. Since there are \(\mathfrak{c}\) such stabilisers $S$, the result follows.
    
     $(2):$ 
     Note that, by the comments above, the stabiliser of any TM-filter is closed in $\tau$. Since $\Q$ is $\aleph_0$-homeoclastic, it now follows from \cref{2chomeoclast} and \cref{theorem-stabilisers-clopen-maximal} that there are  \(2^{\mathfrak{c}}\) maximal subgroups of \(\operatorname{Homeo}(\Q)\) which are closed with respect to $\tau$. There cannot be any more, as there are only \(2^{\mathfrak{c}}\) subsets of \(\operatorname{Homeo}(\Q)\).  
\end{proof}

\end{document}